\documentclass[12pt]{amsart}

\usepackage{graphicx}
\usepackage{amsfonts}
\usepackage{epsf}
\usepackage{amssymb}
\usepackage{amsmath}
\usepackage{amscd}
\usepackage{hyperref}
\usepackage{color}
\usepackage{bbm}

\newtheorem{theorem}{Theorem}[section]
\newtheorem{proposition}[theorem]{Proposition}

\newtheorem{corollary}[theorem]{Corollary}

\theoremstyle{remark}

\newtheorem{example}[theorem]{Example}
\newtheorem{remark}{Remark}[section]

\newcommand{\eps}{\epsilon}

\hoffset=-13mm
\voffset=-10mm
\textwidth 155mm
\textheight 22cm

\input xy
\xyoption{all}

\begin{document}

\title{On the periodic non-orientable 4-genus of a knot}
\begin{abstract}
We show that the equivariant and non-equivariant non-orientable 4-genus of $p$-periodic knots may differ, for any choice of $p\ge 2$. Similar results have previously been obtained for the smooth 4-genus and non-orientable 3-genus of a periodic knot. These stand in contrast to Edmods' acclaimed result by which the equivariant and non-equivariant Seifert genus of a periodic knot agree. 
\end{abstract}
%
%\subjclass[2010]{57M25 and 57M27} 
%
\thanks{The second author was partially supported by the Simons Foundation, Award ID 524394, and by the NSF, Grant No. DMS--1906413. }
\author{Taran Grove}
\email{tarangrove@gmail.com}
\address{Department of Mathematics and Statistics, University of Nevada, Reno NV 89557.}%, USA.}
%%%
\author{Stanislav Jabuka}
\email{jabuka@unr.edu}
\address{Department of Mathematics and Statistics, University of Nevada, Reno NV 89557.}%, USA.}
\maketitle
%%%%%
%%%%%
%%%%%
%%%%%
%%%%%
%%%%%
\section{Introduction and Results}
%%%
%%%
A knot $K$ is said to have {\em period $p\ge 2$}, or to be {\em $p$-periodic}, if there exists an orientation preserving diffeomorphism $f:S^3\to S^3$ of order $p$ such that $f(K)=K$ and whose fixed point set is diffeomorphic to $S^1$. A spanning surface $\Sigma\subset S^3$ for $K$ shall be called {\em equivariant} or {\em $p$-periodic} if $f(\Sigma ) = \Sigma$.  Similarly, a smoothly and properly embedded surface $\Sigma \subset D^4$ with $\partial \Sigma =K$, shall be called {\em equivariant} or {\em $p$-periodic} provided there exists an order $p$, orientation preserving diffeomorphism $F:D^4\to D^4$ with $F|_{S^3} = f$ and with $F(\Sigma ) = \Sigma$. 

Recall these four flavors of knot genera of a knot $K$: 
\begin{itemize}
	\item[] $g_3(K)$ - the {\em Seifert} or {\em 3-genus of $K$}, 
	\item[] $g_4(K)$ - the {\em (smooth) 4-genus} or the {\em (smooth) slice genus of $K$}, 
	\item[] $\gamma_3(K)$ - the {\em non-orientable 3-genus} or {\em crosscap number of $K$}, and
	\item[] $\gamma_4(K)$ - the {\em non-orientable (smooth) 4-genus of $K$}. 
\end{itemize}
Of these, $\gamma_3(K)$ and $\gamma_4(K)$ are defined to equal the least first Betti number among all non-orientable surfaces $\Sigma $ with boundary $K$ and with $\Sigma $ embedded into $S^3$ for the case of $\gamma_3(K)$, and $\Sigma$ properly and smoothly embedded into $D^4$ for the case of $\gamma_4(K)$. 
\vskip1mm
It is an interesting question so ask how these knot genera change if, in the case of a $p$-periodic knot, one only considers $p$-periodic surfaces of the relevant type (oriented or not, embedded into $S^3$ or $D^4$). Motivated by this question, we introduce four flavors of {\em equivariant} or {\em $p$-periodic knot genera} for $p$-periodic knots, namely $g_{3,p}, g_{4,p}, \gamma_{3,p}$ and $\gamma_{4,p}$, defined in the obvious way. For example, 
$$\gamma_{4,p}(K) = \min \left\{ b_1(\Sigma) \quad \big|\quad   \begin{minipage}{90mm}  $\Sigma\subset D^4$ is a smoothly and properly embedded, equivariant, non-orientable surface with $\partial \Sigma =K$ \end{minipage} \right\}.$$
It is not difficult to see that equivariant surfaces of all four flavors, bounding a given periodic knot $K$, always exist (cf. Proposition \ref{PropositionExtindingPeriodicityToTheFourBall}), making the above definitions well posed. 
\vskip1mm
Edmonds' celebrated result \cite{Edmonds} can be reinterpreted as stating that $g_3(K) = g_{3,p}(K)$ for every $p$-periodic knot $K$.  It is known that there exist $p$-periodic knots $K$ with $g_{4,p}(K)>g_{4}(K)$, see \cite{BoyleIssa, ChaKo, DavisNaik, Naik}. The second author showed \cite{Jabuka2021} that for every odd integer $p\ge 3$ there exists a torus knot $K_p$ with $\gamma_3(K_p) = 2$ and with $\gamma_{3,p}(K_p) \ge p$, proving that difference $\gamma_{3,p}(K) - \gamma_3(K)$ can become arbitrarily large. 

The main goal of this work is to compare $\gamma_4(K)$ and $\gamma_{4,p}(K)$. Our results show that these two quantities can indeed differ, leaving the Seifert genus $g_3$ as the only genus flavor for which the equivariant and non-equivarint definitions agree. 
%%%
%%%
\begin{theorem}  \label{main}
For every integer $p\ge 2$ there exists a $p$-periodic alternating knot $K_p$ with $\gamma_{4,p}(K_p) > \gamma_4(K_p)$. One may chose $K_p$ to be slice if $p$ is even, and to be amphicheiral (and non-slice) if $p$ is odd. 

Specifically, if one let's $K_p$ denote the $p$-fold connected sum of the Figure Eight knot with itself, then
$$\gamma_4(K_p) = \left\{
\begin{array}{cl}
2 & \quad ; \quad n \text{ odd}, \cr
1 	 & \quad ; \quad n \text{ even},
\end{array}
\right.
\qquad \text{ and } \qquad 
\gamma_{4,p}(K_p) \ge  \left\{
\begin{array}{cl}
	3 & \quad ; \quad n \text{ odd}, \cr
	2 	 & \quad ; \quad n \text{ even}.
\end{array}
\right.$$  
%There exists at least one $p$-periodic knot $K$ for which $\gamma_{4,p}(K) > \gamma_4(K)$. An example is given by the $3$-period knot $K=12a_{1022}$ for which $\gamma_4(K) = 1$, and with $\gamma_{4,3}(K)$ satisfying the double bound $2\le \gamma_{4,3}(K)\le 5$. 
\end{theorem}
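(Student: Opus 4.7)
The plan is to prove the theorem in two parts: compute $\gamma_4(K_p)$ (the non-equivariant invariant), then establish the lower bound on $\gamma_{4,p}(K_p)$. The main content is the second part.

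For $\gamma_4(K_p)$, the amphicheirality of $4_1$ implies that $4_1\#4_1$ is smoothly slice, so $K_p$ is slice when $p$ is even. Then a slice disk $D\subset D^4$ boundary-connect-summed with the M\"obius band of a small unknot placed near $\partial D$ yields a non-orientable surface of first Betti number $1$, giving $\gamma_4(K_p)\leq 1$ for $p$ even; combined with non-triviality of $K_p$ this gives equality. For $p$ odd, $K_p$ is concordant to $4_1$ (since $4_1$ has order two in the concordance group), and $\gamma_4$ is a concordance invariant (gluing a concordance annulus to a non-orientable surface preserves both $b_1$ and non-orientability), so $\gamma_4(K_p)=\gamma_4(4_1)=2$, with the lower bound for $4_1$ itself settled by a classical obstruction such as Batson's signature bound or the $d$-invariants of $\Sigma_2(4_1)=L(5,2)$.

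For the equivariant lower bound, the strategy is to pass to the quotient orbifold. Let $\Sigma\subset D^4$ be a $p$-equivariant non-orientable surface bounding $K_p$. The $\mathbb{Z}_p$-action extends the rotation on $S^3$ with fixed set a properly embedded disk $A\subset D^4$; the quotient $D^4/\mathbb{Z}_p$ is homeomorphic to $D^4$; the quotient knot $K_p/\mathbb{Z}_p$ is $4_1$; and $\tilde\Sigma:=\Sigma/\mathbb{Z}_p$ is a surface bounding $4_1$ in this quotient (after smoothing cone singularities along $\Sigma\cap A$). The Riemann-Hurwitz identity yields
\[
\chi(\Sigma)=p\,\chi(\tilde\Sigma)-(p-1)\,|F|,\qquad F:=\Sigma\cap A,
\]
equivalently $b_1(\Sigma)=p\,b_1(\tilde\Sigma)+(1-p)+(p-1)|F|$. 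For $p$ odd, a transfer argument shows that $H^1(\tilde\Sigma;\mathbb{F}_2)\hookrightarrow H^1(\Sigma;\mathbb{F}_2)$ is injective, so $\Sigma$ non-orientable forces $\tilde\Sigma$ non-orientable; combined with $b_1(\tilde\Sigma)\geq\gamma_4(4_1)=2$ we obtain $b_1(\Sigma)\geq p+1\geq 3$. For $p$ even, $\tilde\Sigma$ may be orientable, but in either case $b_1(\tilde\Sigma)\geq 2$ (from $\gamma_4(4_1)=2$ if non-orientable, or from $2g_4(4_1)=2$ if orientable), yielding $b_1(\Sigma)\geq p+1\geq 2$.

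The main technical obstacle is the non-orientable slice bound $\gamma_4(4_1)\geq 2$, which underpins both the stated value of $\gamma_4(K_p)$ for $p$ odd and the equivariant lower bound. The equivariant portion is then a clean Riemann-Hurwitz calculation combined with the standard transfer-based orientability analysis under odd-degree branched covers; the deep input throughout is the classical non-orientable slice obstruction applied to the figure eight.
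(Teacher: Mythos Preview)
Your approach via the quotient and Riemann--Hurwitz is quite different from the paper's, which proves the bound by showing that no \emph{equivariant} lattice embedding $(\mathbb Z^{2p},G_p)\hookrightarrow(\mathbb Z^{2p+k},\pm\mathrm{Id})$ (with $k=1$ or $2$) exists, an obstruction coming from Donaldson's theorem applied to the double branched cover of $S^4$. Your method is more elementary and, if it worked as written, would in fact yield the stronger bound $\gamma_{4,p}(K_p)\ge p+1$. But there is a real gap.

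The definition of $\gamma_{4,p}$ allows an \emph{arbitrary} order-$p$ diffeomorphism $F:D^4\to D^4$ extending $f$, not only one conjugate to a linear rotation. For such $F$ it is not known that $D^4/\langle F\rangle$ is diffeomorphic to the standard smooth $D^4$; it is a contractible smooth $4$-manifold with boundary $S^3$, hence (by Freedman) homeomorphic to $D^4$, but possibly smoothly exotic. You therefore cannot simply invoke the bound $\gamma_4(4_1)=2$, which as stated refers to the standard ball. The phrase ``after smoothing cone singularities'' does not address this. A repair is available---the obstruction $\sigma(4_1)+4\,\mathrm{Arf}(4_1)\equiv 4\pmod 8$ to bounding a M\"obius band comes from the $G$-signature formula on the double branched cover and remains valid in any smooth homology $4$-ball---but this must be argued explicitly, and it is the crux of the matter. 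The paper's route, by contrast, lifts $F$ to the double branched cover of $S^4$ and never passes to the quotient, so it is insensitive to this issue.

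Two smaller points. Your transfer argument is backwards: injectivity of $\pi^*:H^1(\tilde\Sigma;\mathbb F_2)\to H^1(\Sigma;\mathbb F_2)$ yields $\tilde\Sigma$ non-orientable $\Rightarrow\Sigma$ non-orientable, which is the wrong direction. The correct (and easier) observation is that an orientation on $\tilde\Sigma$ pulls back through the branched cover to one on $\Sigma$, so $\Sigma$ non-orientable forces $\tilde\Sigma$ non-orientable for \emph{every} $p$, not just odd $p$; your case split is unnecessary. Second, $\tilde\Sigma$ is genuinely non-smoothly embedded at the branch points, and while each such singularity has unknotted link in a small $S^3$ and can be replaced by a smooth disk without changing $b_1$, this too needs to be said.
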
    
%%%
%%%
The techniques used to prove the preceding theorem require the knots under consideration to be alternating. Among low-crossing alternating knots there are several periodic examples, but our techniques were unable to identify any knots among them with differing equivariant and non-equivariant non-orientable 4-genus.  
%%%%%%%%%%%%%%%%%%%%%%%%%%%%%%%%%%%%%%%%%%
%%%%%%%%%%%%%%%%%%%%%%%%%%%%%%%%%%%%%%%%%%
\vskip3mm 
\noindent {\bf Organization } Section \ref{SectionBackground} provides the needed background for the proof of Theorem \ref{main}, with examples to illustrate definitions and theorems. Section \ref{SectionOnConnectedSumsOfFigureEightKnots} applies the concepts from Section \ref{SectionBackground} to the $p$-periodic knot $K_p$ obtained as the $p$-fold connected sum of the Figure Eight knot. The section concludes with a proof of Theorem \ref{main}.  
\vskip3mm
\noindent {\bf Acknowledgements } We wish to thank Chris Herald for helpful suggestions.

%%%%%%%%%%%%%%%
%%%%%%%%%%%%%%%
\section{Background} \label{SectionBackground}
\subsection{Goertiz matrices}  \label{SectionGoertizMatrices}
%%%
%%%
Our exposition in this section follows closely that of Gordon and Litherland \cite{GordonLitherland}. 

Consider one of the two possible checkerboard colorings of a knot diagram. An example of a $3$-periodic diagram for the knot $12a_{1019}$ and its two colorings is shown in Figure \ref{Figure12a-1019Checkerboards}. 
%%%
%%% 
\begin{figure}
\includegraphics[width=16cm]{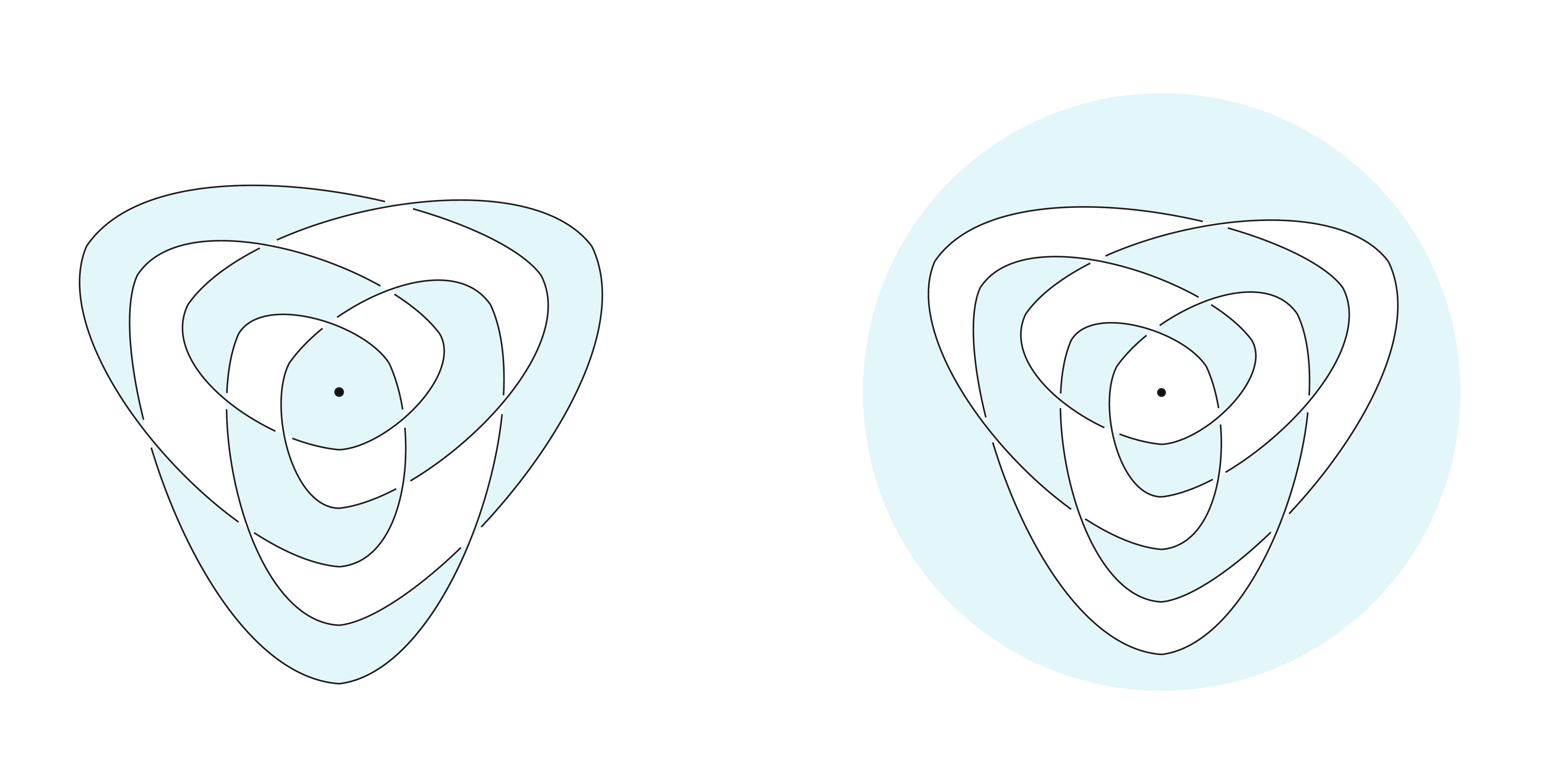}
\put(-315,-20){(a)}
\put(-85,-20){(b)}
\put(-407,105){\tiny $X_1$}
\put(-334,78){\tiny $X_2$}
\put(-345,155){\tiny $X_3$}
\put(-386,118){\tiny $X_4$}
\put(-360,88){\tiny $X_5$}
\put(-345,128){\tiny $X_6$}
\put(-295,80){\tiny $X_0$}
\put(-145,158){\tiny $X_4$}
\put(-124,44){\tiny $X_5$}
\put(-62,140){\tiny $X_6$}
\put(-140,139){\tiny $X_1$}
\put(-136,78){\tiny $X_2$}
\put(-89,120){\tiny $X_3$}
\put(-115,110){\tiny $X_0$}
\caption{The two checkerboard colorings of a $3$-periodic diagram for the knot $12a_{1019}$. The 3-periodicity is facilitated by the order 3 diffeomorphism $f:S^3\to S^3$ which is counterclockwise rotation by $2\pi/3$ about the central dot indicated in both knot diagrams. }
\label{Figure12a-1019Checkerboards} 
\end{figure}
%%%
%%%
We assign weights $\eta(x) \in \{\pm 1\}$ to each double point $x$ of the knot projection according to the rule from Figure \ref{DoublePointsWeights}. For example, each of the 12 double points in the diagram in Part (a) of Figure \ref{Figure12a-1019Checkerboards} receives weight -1, while each of the 12 double points in Part (b) of the same figure receives weight 1. 

Let $X_0, \dots, X_n$ label the white regions of the checkerboard coloring (cf. Figure \ref{Figure12a-1019Checkerboards}). The the {\em pre-Goertiz matrix $PG$} is the $(n+1)\times (n+1)$ matrix with entries $[g_{i,j}]$, $i,j=0,\dots, n$, given by 
$$g_{i,j} = \left\{
\begin{array}{cl}
-\sum _{x\in X_i\cap X_j} \eta (p) & \quad ; \quad i\ne j, \cr
& \cr
-\sum _{k\ne i} g_{i,k} & \quad ; \quad i=j. 
\end{array}
\right.$$
In the above, $X_i\cap X_j$ denotes the set of double points of the knot projection that are incident to both white regions $X_i$ and $X_j$. The {\em Goertiz matrix $G=[g_{i,j}]$}, $i,j=1,\dots ,n$, is the $n\times n$ matrix obtained form $PG$ by deleting its first column and first row. One may more generally pass from $PG$ to $G$ by deleting the $i$-th column and $i$-th row in $PG$, but without loss of generality we shall always choose the first row and column, for if needed one may re-index the regions $X_0, \dots, X_n$ to begin with.   

If the knot projection is alternating, the Goeritz matrices of both checkerboard colorings are definite, one positive and one negative definite. This property characterizes alternating knots as proved by Greene \cite{Greene}. 
%%%
%%% 
\begin{figure}
	\includegraphics[width=8cm]{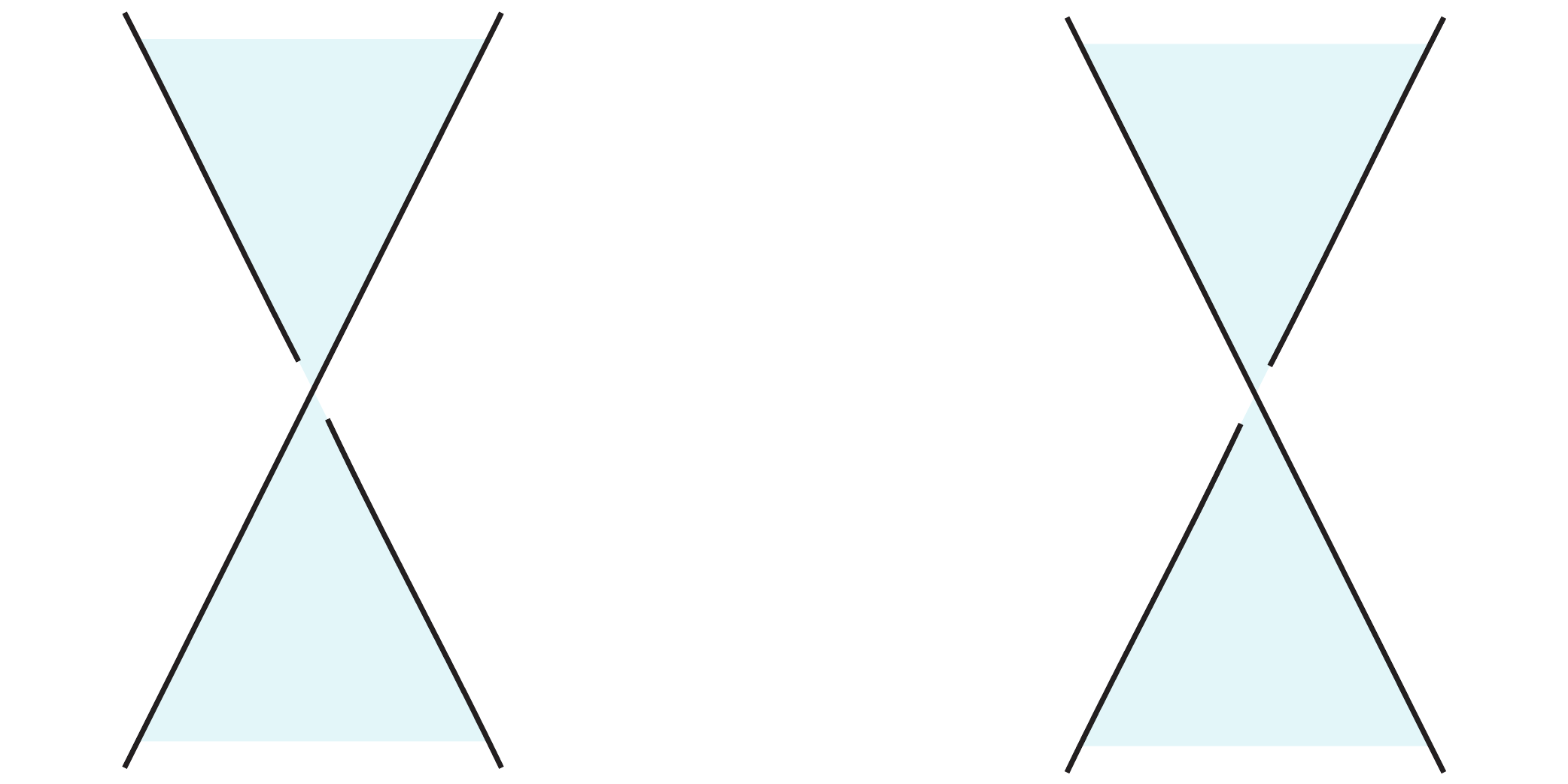}
	\put(-200,55){$x$}
	\put(-201,-20){$\eta(x) = 1$}
	\put(-35,55){$x$}
    \put(-70,-20){$\eta(x) = -1$}
	\caption{Double points in a knot projection are assigned weights $\pm 1$. }
	\label{DoublePointsWeights} 
\end{figure}
%%%
%%%
%
\begin{example}  \label{ExampleOfTheGoeritzMatricesFor12a1019}
The Goertiz matrices $G_-$ and $G_+$ associated to the checkerboard colorings in Parts (a) and (b) respectively in Figure \ref{Figure12a-1019Checkerboards}, are
	\begin{equation} \label{GoeritzMatrixFor12a-1022}
		G_- = \left[
		\begin{array}{rrrrrr}
			-4  &  1  & 1  & 1  & 0  & 0 \cr 
			1 & -4   & 1 & 0 & 1 & 0 \cr
			1 & 1 & -4   & 0 & 0 &1 \cr
			1 & 0 & 0 & -3  &  1 & 1 \cr
			0 & 1 & 0 & 1 & -3  &  1 \cr
			0 & 0 & 1 & 1 & 1 & -3 
		\end{array}
		\right],
		\qquad 
		G_+ = \left[
		\begin{array}{rrrrrr}
			4  & - 1  & -1  & -1  & 0  & 0 \cr 
			-1 & 4   & -1 & 0 & -1 & 0 \cr
			-1 & -1 & 4   & 0 & 0 &-1 \cr
			-1 & 0 & 0 & 3  &  -1 & -1 \cr
			0 & -1 & 0 & -1 & 3  &  -1 \cr
			0 & 0 & -1 & -1 & -1 & 3 
		\end{array}
		\right].
	\end{equation}
%%%
For randomly chosen knot projections, the matrices $G_-$ and $G_+$ are typically rather different, even of different ranks, but in the case above one finds that $G_+ = -G_-$. 
\end{example}

Let $\Sigma$ denote the spanning surface for $K$ resulting from the black regions of the checkerboard coloring. This surface is comprised of the black disks, connected by half-twisted bands at the double points of the projections. Note that $\Sigma$ may be orientable or not. Let $\hat \Sigma$ be obtained from $\Sigma$ by a push-in of the interior of $\Sigma$ into $D^4$ so that $\hat \Sigma$ is a smoothly and properly embedded surface in $D^4$ with $\partial \hat \Sigma=K$. Let $\hat X$ be the smooth 4-manifold obtained as the double cover of $D^4$ with branching set $\hat \Sigma$. Observe that $\partial \hat X$ is the double cover of $S^3$ branched along $K$. The following result follows from Theorems 1 and 3 in \cite{GordonLitherland}.
\begin{theorem}[Gordon--Litherland \cite{GordonLitherland}] \label{TheoremGordenLitherland}
Let $K, n, G, \Sigma, \hat \Sigma$ and $\hat X$ be as in the preceding paragraph and let  $Q_{\hat X}$ denote the intersection form on $H_2(\hat X;\mathbb Z)/Tor$. Then there is an isomorphism of lattices
$$ (H_2(\hat X;\mathbb Z)/Tor, Q_{\hat X}) \cong (\mathbb Z^n, G). $$ 
\end{theorem}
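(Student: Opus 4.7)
The strategy is to produce an explicit $\mathbb{Z}$-basis of $H_2(\hat X;\mathbb Z)/\mathrm{Tor}$ from the white regions and verify that the intersection pairing in this basis is exactly $G$.

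For each white region $X_i$ with $i\ge 1$, I would push the closure of $X_i$ slightly into the interior of $D^4$ while simultaneously sliding $\partial X_i$ onto $\hat\Sigma$. This produces a properly embedded disk $D_i\subset D^4$ whose interior is disjoint from $\hat\Sigma$ and whose boundary is contained in the branch locus. Its preimage $F_i := \pi^{-1}(D_i)\subset\hat X$ under the branched cover $\pi\colon\hat X\to D^4$ is then a $2$-sphere, since above $\partial D_i\subset\hat\Sigma$ the map $\pi$ is one-to-one, while above the interior of $D_i$ it is a trivial double cover. This yields $n$ spherical classes $[F_1],\dots,[F_n]\in H_2(\hat X;\mathbb Z)$.

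To see that these classes give a basis, I would first pin down the rank. Using that the checkerboard surface with $n+1$ white regions has $\chi(\hat\Sigma)=1-n$ (the $b$ black disks are joined by $c$ half-twisted bands with $b-c = 2-(n+1)$), the branched-cover formula $\chi(\hat X) = 2\chi(D^4)-\chi(\hat\Sigma)$ gives $\chi(\hat X) = n+1$. Combined with the connectedness and standard vanishing of $H_1(\hat X;\mathbb Q)$ and $H_3(\hat X;\mathbb Q)$, this yields $\mathrm{rank}\,H_2(\hat X;\mathbb Z) = n$, and the linear independence of the $[F_i]$ will follow a posteriori from the non-degeneracy of the intersection pairing.

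For the off-diagonal entries $F_i\cdot F_j$ with $i\ne j$, the disks $D_i$ and $D_j$ have disjoint interiors (they project to distinct white regions), so $F_i$ and $F_j$ can only meet near the double points shared by $X_i$ and $X_j$. A local model of the branched cover near such a point $x\in X_i\cap X_j$ shows that $x$ contributes $-\eta(x)$ to $F_i\cdot F_j$, reproducing $g_{ij} = -\sum_{x\in X_i\cap X_j}\eta(x)$.

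The main obstacle is the diagonal entry $F_i\cdot F_i$, which is the Euler number of the normal bundle of $F_i\subset\hat X$. The subtlety is that $F_i$ is obtained by gluing two copies of a disk along a curve on the branch locus, and the normal framing twists nontrivially under the double cover. The cleanest approach is to fix an extendible framing of $\partial D_i$ in $D^4\setminus\hat\Sigma$ and track its defect upon lifting to $\hat X$; each double point on $\partial X_i$ then contributes a local term, governed by $\eta(x)$ together with the twisting of the corresponding half-twisted band in $\hat\Sigma$, and the local contributions sum to $-\sum_{j\ne i}g_{ij} = g_{ii}$. This local Euler-number accounting is the technical heart of the Gordon--Litherland argument and is where the structure of the branching enters in an essential way.
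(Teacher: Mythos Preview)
Your cycles are essentially the paper's cycles: once you orient $F_i=\pi^{-1}(D_i)$ so that the covering involution acts by $-1$, it agrees (in homology) with the paper's class $S(\alpha_i)$, built as the cone on $\alpha_i=\partial X_i$ in one copy of $D^4$ minus the cone on $\iota(\alpha_i)$ in the other, for the decomposition $\hat X = D_1\cup_{N(\Sigma)} D_2$. Both sketches then defer the local intersection/framing computation to Gordon--Litherland, so on that front the approaches match.

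The real difference, and the gap in your argument, is in establishing that these classes form a $\mathbb Z$-\emph{basis} of $H_2(\hat X;\mathbb Z)/\mathrm{Tor}$. The paper gets this from the Mayer--Vietoris sequence of $\hat X=D_1\cup D_2$, which yields an \emph{integral} isomorphism $H_2(\hat X;\mathbb Z)\cong H_1(N;\mathbb Z)\cong H_1(\Sigma;\mathbb Z)$ carrying $S(\alpha_i)$ to the known basis element $[\alpha_i]$. Your route---rank $n$ via Euler characteristic, then linear independence of the $[F_i]$ from $\det G\ne 0$---only shows that the $[F_i]$ span a rank-$n$ sublattice of $H_2(\hat X;\mathbb Z)/\mathrm{Tor}$; nothing rules out that sublattice having index greater than $1$. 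As written you have produced an embedding of $(\mathbb Z^n,G)$ into $(H_2(\hat X;\mathbb Z)/\mathrm{Tor},Q_{\hat X})$, not an isomorphism. You could close this gap by, for instance, comparing $|\det G|$ with the order of $H_1$ of the boundary (the double branched cover of $S^3$ along $K$), or by exhibiting a handle decomposition of $\hat X$ with only $0$- and $2$-handles; but the Mayer--Vietoris argument in the paper handles it in one stroke.
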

%%%
%%%
\begin{proof}[Sketch of proof]
We provide here only the part of the proof that is relevant for later constructions, and let the interested reader find complete details in \cite{GordonLitherland}. 

The branched covering space $\hat X$ can be constructed by gluing together two copies $D_1$ and $D_2$ of the manifold obtained by cutting open $D^4$ along the trace of the isotopy from $\Sigma$ to $\hat \Sigma$. Each of $D_1$ and $D_2$ is itself diffeomorphic to $D^4$ and the exposed portion after cutting along the trace of the isotopy, corresponds to a tubular neighborhood $N=N(\Sigma)$ of $\Sigma$ in $S^3$. Let $\iota :N\to N$ be the involution given by reflection in the fiber (viewing here $N$ as an $I$-bundle over $\Sigma$), then 
$$\hat X = D_1 \cup _{x\sim \iota(x)} D_2, \qquad x\in N.$$
The Mayer-Vietoris sequence for this decomposition of $\hat X$ shows that the connecting homomorphism $H_2(\hat X;\mathbb Z) \to H_1(N;\mathbb Z)$ is an isomorphism. Indeed, its inverse $S:H_1(N;\mathbb Z) \to H_2(\hat X; \mathbb Z)$ is given by $S([\alpha]) = [S(\alpha)]$ for where a 1-cycle $\alpha$ in $N$, $S(\alpha)$ is geometrically constructed as:
$$S(\alpha ) = (\text{Cone of $\alpha$ in } D_1) - (\text{Cone of $\iota(\alpha)$ in } D_2).$$
Let $j:\Sigma\to N$ be the inclusion map, then the composition $S\circ j_\ast : H_1(\Sigma ; \mathbb Z) \to H_1(\hat X; \mathbb Z)$ is an isomorphism. 

The connection between the white regions $X_0, \dots, X_N$ of the knot diagram and $H_1(\Sigma;\mathbb Z)$ is as follows: Let $S^2\subset S^3$ be the sphere on which the knot diagram lives. Each white region $X_i$, $i=0,\dots, n$ inherits an orientation from $S^2$, and we let $\alpha_i=\partial X_i$ be the oriented boundary of $X_i$, viewed as a 1-cycle on $\Sigma$. It is easy to see that $[\alpha _0]+[\alpha _1]+\dots +[\alpha _n]=0\in H_1(\Sigma ; \mathbb Z)$ and that $\{[\alpha _0], \dots, [\alpha _n]\} - \{[\alpha _i]\}$ is a basis for $H_1(\Sigma ; \mathbb Z)$ for any choice of $i\in \{0, \dots, n\}$. It remains to show that for the basis $\{[\alpha _1], \dots, [\alpha _n]\}$ of $H_1(\Sigma ; \mathbb Z)$ one obtains 
$$G(X_i, X_j) = Q_{\hat X}\left( S\circ j_\ast ([\alpha _i])\, , \, S\circ j_\ast ([\alpha _j] )\right), $$
which is omitted here but is part of the proof of Theorem 1 in \cite{GordonLitherland}.   
\end{proof}
%%%%%%%%%%%%%%%%%%%%%%%%%%%%%%%%%%%%%%%%%%%%%%%%%%%5
%%%%%%%%%%%%%%%%%%%%%%%%%%%%%%%%%%%%%%%%%%%%%%%%%%%5
%%%%%%%%%%%%%%%%%%%%%%%%%%%%%%%%%%%%%%%%%%%%%%%%%%%5
\subsection{Definiteness of certain branched covers}  \label{SectionOnBranchedCovers}
%%%
%%%
Consider a smoothly and properly embedded M\"obius band $M$ in $D^4$, and let $\tilde X$ be the twofold cover of $D^4$ with branching set $M$. Then $b_2(\tilde X) = b_1(M) = 1$ and thus $\tilde X$ is either positive or negative definite. For some knots the type of definiteness of $\tilde X$ can be determined as in \cite{GilmerLivingston}: $\tilde X$ is positive (negative) definite if 
$$ \sigma (K) + 4\cdot \text{Arf}\,(K) \equiv 2 \pmod 8 \qquad (\sigma (K) + 4\cdot \text{Arf}\,(K) \equiv -2 \pmod 8).$$
If $\sigma (K) + 4\cdot \text{Arf}\,(K) \equiv 0 \pmod 8$ then no such determination can be made, while if $\sigma (K) + 4\cdot \text{Arf}\,(K) \equiv 4 \pmod 8$, $K$ cannot bound a M\"obius band in $D^4$ (see again \cite{GilmerLivingston}). 
\vskip1mm
If $K$ bounds a smoothly and properly embedded punctured Klein Bottle $B$ in $D^4$, let $\tilde X$ again denote the twofold cover of $D^4$ this time branched along $B$. Then $b_2(\tilde X) =b_1(B) = 2$. Compared to the case of $K$ bounding a M\"obius band, in the present case it becomes more difficult to tell when $\tilde X$ is a definite manifold. Nevertheless, we have the following result from \cite{JabukaKelly}. 
%%%
%%%
\begin{theorem}[Jabuka-Kelly \cite{JabukaKelly}] \label{JabukaKelly}
If a knot $K$ bounds a puncture Klein Bottle $B$ smoothly and properly embedded in $D^4$ and if $\sigma (K) +4\cdot \text{Arf}(K)\equiv 4 \pmod 8$, then the twofold cover $\tilde X$ of $D^4$ branched along $B$ is definite (either positive or negative). 
\end{theorem}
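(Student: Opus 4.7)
\emph{Plan.} By the Mayer--Vietoris computation sketched in the proof of Theorem~\ref{TheoremGordenLitherland}, applied now to the double cover branched along $B$ instead of $\hat\Sigma$, we have $b_2(\tilde X)=b_1(B)=2$. The intersection form on $\tilde X$ therefore has rank two, so $\sigma(\tilde X)\in\{-2,0,2\}$, and $\tilde X$ is definite if and only if $\sigma(\tilde X)\ne 0$. The task is thus to rule out $\sigma(\tilde X)=0$ under the given hypothesis.

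The first ingredient is a Gordon--Litherland type signature formula for non-orientable surfaces pushed into $D^4$: adapting the Mayer--Vietoris / cone construction used in Theorem~\ref{TheoremGordenLitherland} and carefully tracking the non-triviality of the normal bundle of $B$ in $D^4$, one obtains an identity of the shape
\begin{equation*}
\sigma(\tilde X) \;=\; \sigma(K) \,-\, \tfrac{1}{2}\,e(B),
\end{equation*}
where $e(B)\in 2\mathbb Z$ is the normal Euler number of the properly embedded surface $B\subset D^4$. Because $\sigma(\tilde X)\in\{-2,0,2\}$ and $\sigma(K)$ is always even, this in fact already forces $e(B)\in 4\mathbb Z$ and pins $e(B)$ down to three possible values.

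The second, and main, ingredient is a Rokhlin--type congruence controlling $e(B)\pmod 8$ in terms of $\sigma(K)$ and $\text{Arf}(K)$, in the same spirit as the Gilmer--Livingston analysis of the M\"obius band case. The cleanest implementation is to cap $\tilde X$ across its boundary $\partial\tilde X=\Sigma_2(K)$ by a convenient spin filling---for example the double cover of $D^4$ branched over a pushed-in Seifert surface for $K$---producing a closed four-manifold $W$. Applying Rokhlin's theorem to $W$ with an appropriately chosen characteristic element, and using the classical identification of the Rokhlin invariant of $\Sigma_2(K)$ with $\text{Arf}(K)\pmod 2$, one derives a mod 8 congruence between $e(B)$, $\sigma(K)$, and $4\,\text{Arf}(K)$. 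Combined with the signature formula of the previous paragraph, this translates the hypothesis $\sigma(K)+4\,\text{Arf}(K)\equiv 4\pmod 8$ into the conclusion $\sigma(\tilde X)\not\equiv 0\pmod 4$; since $|\sigma(\tilde X)|\le 2$, we must have $\sigma(\tilde X)=\pm 2$, so $\tilde X$ is definite.

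I expect the hardest step to be the mod 8 congruence above: one must select a spin filling of $\Sigma_2(K)$ whose spin structure is compatible with the double branched cover structure on $\tilde X$ across the gluing, carefully track the characteristic class contribution of the non-orientable branch locus $B$ to the signature, and only then invoke the Arf--Rokhlin identification. Once this congruence is in place, the remainder of the argument reduces to a short parity count on the signature of a rank two form.
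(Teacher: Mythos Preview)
The paper does not prove this theorem at all; it is quoted as a black-box result from \cite{JabukaKelly} and used without argument. There is therefore no ``paper's own proof'' to compare against.

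That said, your outline is a reasonable sketch of how the result is actually established in the literature. The two ingredients you isolate---the Gordon--Litherland type identity $\sigma(\tilde X)=\sigma(K)+\tfrac{1}{2}e(B)$ (up to sign convention) and a mod $8$ congruence coming from a Rokhlin/Guillou--Marin argument applied after capping off $\Sigma_2(K)$---are precisely the tools used in \cite{JabukaKelly} and in the M\"obius band precursor \cite{GilmerLivingston}. Your endpoint reasoning is correct: once one knows $\sigma(\tilde X)\equiv \tfrac{1}{2}\bigl(\sigma(K)+4\,\text{Arf}(K)\bigr)\pmod 4$, the hypothesis forces $\sigma(\tilde X)\equiv 2\pmod 4$, hence $\sigma(\tilde X)=\pm 2$ and $\tilde X$ is definite.

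What you have written, however, is a plan rather than a proof. The second ingredient is where all the content lies, and you have not actually carried it out: you would need to specify the characteristic surface (or spin structure) in the capped-off manifold, verify that the Guillou--Marin/Rokhlin congruence applies with the correct correction terms for a non-orientable branch locus, and check that the Brown/Arf invariant of the relevant quadratic refinement really computes $\text{Arf}(K)$. These steps are routine for someone who has internalized \cite{GilmerLivingston}, but they are not automatic, and sign/normalization errors are easy to make. If you intend this as a self-contained proof rather than a pointer to \cite{JabukaKelly}, that paragraph needs to be expanded into an actual computation.
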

%%%%%%%%%%%%%%%%%%%%%%%%%%%%%%%%%%%%%%%%5
%%%%%%%%%%%%%%%%%%%%%%%%%%%%%%%%%%%%%%%%%
\subsection{An application of Donaldson's Theorem} \label{SectionOnDonaldsonsTheorem}
%%%%%%%%
%%%%%%%%
Pick an alternating diagram for the alternating knot $K$, and let $G_{\pm}$ be the positive/negative definite Goeritz matrices associated to the two checkerboard colorings of said diagram. Let $\Sigma _{\pm}$ be the spanning surfaces of $K$ formed by the black regions in either checkerboard coloring, and let $\hat \Sigma_\pm $ be obtained from $\Sigma_\pm$ by pushing their interior into $D^4$. Let $\hat X_\pm $ be the twofold cover of $D^4$ branched along $\hat \Sigma_\pm$ and note that $\hat X_+$ is a positive definite, while $\hat X_-$ is negative definite and both are compact and smooth 4-manifolds. %Let $Y$ denote the twofold cover of $S^3$ with branching set $K$, then $\partial X_\pm =Y$. 

Suppose that $K$ bounds a non-orientable surface $S$, smoothly and properly embedded into the 4-ball $D^4$, and let $\tilde X$ be the twofold cover of $D^4$ branched along $S$.  Let 
\begin{equation} \label{EquationConstructionOfTheClosedFourManifold}
	Z_\pm =\hat X_\pm \cup _{\partial \hat X = \partial \tilde X}(- \tilde X).
\end{equation}
Then $Z_\pm $ is smooth, closed and $Z_+$ is positive definite if $\tilde X$ is negative definite, and $Z_-$ is negative definite if $\tilde X$ is positive definite. Note that $Z_\pm$ can alternatively be obtained as the twofold cover of $S^4$ branched along the non-orientable closed surface $\hat \Sigma\cup _{\partial \hat \Sigma = \partial S} S$.

An application of Donaldson's foundational theorem on intersection forms of definite, smooth, oriented, closed 4-manifolds \cite{Donaldson}, when applied to the manifolds $Z_\pm$, yields the next theorems, one for the case of $S$ being a M\"obius band, and one for the case of $S$ being a puncture Klein Bottle. Implicit in the statements of the theorems is fact that the inclusion induced map   
$$H_2(\hat X_\pm;\mathbb Z)\oplus H_2(-\tilde X;\mathbb Z) \to H_2(Z_\pm;\mathbb Z)$$  
is injective, with cokernel a finite group of order that divides $\det K$. 
%%%
%%%
\begin{theorem} \label{DonaldsonsEmbeddingTheoremForMobiusBands}
Pick an alternating projection of an alternating knot $K$, let $G_\pm$ be the associated positive/negative definite Goeritz matrices, and let $\hat X_\pm$ be the positive/negative definite 4-manifold obtained as the twofold cover of $D^4$ with branching set $\hat \Sigma_\pm$, the push-in into $D^4$ of the black surfaces $\Sigma_\pm$ from the checkerboard colorings. 

If $K$ bounds a M\"obius band $M$ in $D^4$ and if $\tilde X$ is the twofold branched cover of $D^4$ along $M$, then there are embeddings of lattices of equal rank: 
$$
\begin{array}{ll}
(\mathbb Z^{n_+}\oplus \mathbb Z, G_+\oplus Q_{\tilde X})  \hookrightarrow 	(H_2(Z_+;\mathbb Z) , Q_{Z_+})\cong (\mathbb Z^{n_++1},  \text{Id}), & \quad  \text{ if $\tilde X$ is negative definite}, \cr
(\mathbb Z^{n_-}\oplus \mathbb Z, G_-\oplus Q_{\tilde X})  \hookrightarrow 	(H_2(Z_-;\mathbb Z) , Q_{Z_-})\cong (\mathbb Z^{n_-+1},  -\text{Id}),  & \quad  \text{ if $\tilde X$ is positive definite}. 
\end{array}  
$$
Here $(\mathbb Z^{n+1},\pm$Id$)$ are the standard positive/negative definite lattice, and the integer $n_\pm$ is the number of white regions minus 1, in the checkerboard colorings used to find $G_\pm$. 
\end{theorem}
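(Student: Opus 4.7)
The plan is to apply Donaldson's diagonalization theorem to the closed $4$-manifold $Z_\pm$ defined in \eqref{EquationConstructionOfTheClosedFourManifold} and then pull back the standard diagonal form along the inclusion-induced embedding on $H_2$.

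First I would verify the hypotheses of Donaldson's theorem for $Z_\pm$. By construction, $Z_\pm$ is a smooth, closed, oriented $4$-manifold obtained by gluing $\hat X_\pm$ and $-\tilde X$ along their common boundary $Y$, the twofold cover of $S^3$ branched over $K$. Since $\hat X_+$ is positive definite and $-\tilde X$ is positive definite exactly when $\tilde X$ is negative definite, the intersection form of $Z_+$ is positive definite in that case; symmetrically, $Z_-$ is negative definite when $\tilde X$ is positive definite. Donaldson's theorem then identifies $(H_2(Z_\pm;\mathbb Z)/\text{Tor},\,Q_{Z_\pm})$ with the standard diagonal lattice $(\mathbb Z^{b_2(Z_\pm)},\pm\text{Id})$.

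Next I would analyze $H_2(Z_\pm;\mathbb Z)$ via Mayer--Vietoris. Since $K$ is an alternating knot and hence has $\det K \ne 0$, the manifold $Y$ is a rational homology sphere with $|H_1(Y;\mathbb Z)|=\det K$, so the relevant fragment
$$
H_2(Y;\mathbb Z)\to H_2(\hat X_\pm;\mathbb Z)\oplus H_2(-\tilde X;\mathbb Z)\to H_2(Z_\pm;\mathbb Z)\to H_1(Y;\mathbb Z)
$$
has $H_2(Y;\mathbb Z)=0$ and $H_1(Y;\mathbb Z)$ finite of order $\det K$. Hence the inclusion-induced map on $H_2$ is injective with cokernel of order dividing $\det K$, and in particular rank-preserving. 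Classes supported in $\hat X_\pm$ and in $-\tilde X$ admit disjoint representatives, so the intersection form on the image is the orthogonal direct sum of the two intersection forms.

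Combining these ingredients with Theorem~\ref{TheoremGordenLitherland}, which identifies $(H_2(\hat X_\pm;\mathbb Z)/\text{Tor},\,Q_{\hat X_\pm})$ with $(\mathbb Z^{n_\pm},G_\pm)$, and with the observation that $b_2(\tilde X)=b_1(M)=1$, produces the desired embedding
$$
(\mathbb Z^{n_\pm}\oplus\mathbb Z,\,G_\pm\oplus Q_{\tilde X})\hookrightarrow(\mathbb Z^{n_\pm+1},\pm\text{Id}).
$$
The only real subtlety is the rank equality $b_2(Z_\pm)=n_\pm+1$, which drops out of the Mayer--Vietoris analysis above since $H_\ast(Y;\mathbb Q)$ vanishes in the relevant degrees. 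The rest is sign-tracking across the two cases; no deeper obstacle arises once Donaldson's theorem is available in this definite setting.
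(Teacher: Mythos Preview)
Your proposal is correct and follows essentially the same approach as the paper: form the closed manifold $Z_\pm$, check definiteness, apply Donaldson's diagonalization theorem, and use Mayer--Vietoris (together with the fact that the branched double cover $Y$ of $K$ is a rational homology sphere) to see that the inclusion-induced map on $H_2$ is a full-rank injection. The paper states these ingredients more tersely in the paragraph preceding the theorem, while you spell out the Mayer--Vietoris step and the rank count $b_2(Z_\pm)=n_\pm+1$ explicitly; the arguments are otherwise the same.
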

%%%
%%%
\begin{corollary} \label{CorollaryAboutCorankOneEmbeddingOfLattices}
In the setting of Theorem \ref{DonaldsonsEmbeddingTheoremForMobiusBands}, there exist embeddings of lattices 
$$(\mathbb Z^{n_\pm}, G_\pm) \hookrightarrow (\mathbb Z^{n_\pm+1}, \pm \text{Id}),$$
with equal signs chosen on both sides of the embedding.   
\end{corollary}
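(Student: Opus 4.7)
My plan is to obtain Corollary \ref{CorollaryAboutCorankOneEmbeddingOfLattices} as an immediate consequence of Theorem \ref{DonaldsonsEmbeddingTheoremForMobiusBands} by discarding the rank-one orthogonal summand coming from $\tilde X$. Suppose first that $\tilde X$ is negative definite. The theorem then supplies a lattice embedding
$$\Phi_+\colon \bigl(\mathbb Z^{n_+}\oplus \mathbb Z,\; G_+\oplus Q_{\tilde X}\bigr)\hookrightarrow \bigl(\mathbb Z^{n_++1},\,\mathrm{Id}\bigr).$$
I would restrict $\Phi_+$ to the first orthogonal summand $\mathbb Z^{n_+}\oplus\{0\}$. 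The restriction of an injection is an injection, and the restriction of the form $G_+\oplus Q_{\tilde X}$ to $\mathbb Z^{n_+}\oplus\{0\}$ is exactly $G_+$, so the restricted map is a lattice embedding
$$\bigl(\mathbb Z^{n_+}, G_+\bigr)\hookrightarrow \bigl(\mathbb Z^{n_++1},\,\mathrm{Id}\bigr),$$
which is the $+$ case of the corollary, with the signs matching on both sides.

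The case when $\tilde X$ is positive definite is handled symmetrically: restricting the second embedding of Theorem \ref{DonaldsonsEmbeddingTheoremForMobiusBands} to its first summand produces the embedding $(\mathbb Z^{n_-}, G_-)\hookrightarrow (\mathbb Z^{n_-+1},\,-\mathrm{Id})$, i.e.\ the $-$ case, again with the signs of the Goeritz form and the ambient diagonal form in agreement. Either way, the resulting embedding has corank one, since the rank dropped from $n_\pm+1$ on the domain to $n_\pm$ after forgetting the $Q_{\tilde X}$ factor.

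I do not anticipate any real obstacle. The corollary is a cosmetic reformulation of Theorem \ref{DonaldsonsEmbeddingTheoremForMobiusBands} in which the rank-one contribution $Q_{\tilde X}$ is simply stripped off, and the only general fact I invoke is that an embedding of an orthogonal direct sum of lattices restricts to a lattice embedding on each orthogonal summand, which is immediate from the definitions.
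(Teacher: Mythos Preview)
Your argument is correct and matches the paper's treatment: the corollary is stated immediately after Theorem \ref{DonaldsonsEmbeddingTheoremForMobiusBands} without a separate proof, since it follows at once by restricting the embedding of Theorem \ref{DonaldsonsEmbeddingTheoremForMobiusBands} to the $(\mathbb Z^{n_\pm}, G_\pm)$ summand, exactly as you describe.
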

%%%
%%%
%\begin{remark}
%Note that Theorem \ref{TheoremOnIntersectionFromOfBranchedCoverOfMobiusBand} can be used for some knots to determine if $ X$ is positive or negative definite, while Theorem \ref{TheoremOnSelfintersectionOfSingleGenerator} restricts the possible values of $m$. 
%\end{remark}
%%%
%%%
Relying on Theorem \ref{JabukaKelly}, we get the following analogue of Theorem \ref{DonaldsonsEmbeddingTheoremForMobiusBands} for the case of a punctured Klein Bottle replacing the M\"obius band. 
%%%
%%%
\begin{theorem} \label{DonaldsonsEmbeddingTheoremForKleinBottles}
Pick an alternating projection of an alternating knot $K$, let $G_\pm$ be the associated positive/negative definite Goeritz matrices, and let $\hat X_\pm$ be the positive/negative definite 4-manifold obtained as the twofold cover of $D^4$ with branching set $\hat \Sigma_\pm$, the push-in into $D^4$ of the black surfaces $\Sigma_\pm$ from the checkerboard colorings. If $K$ bounds a puncutre Klein Bottle $B$ in $D^4$ and if $\tilde X$ is the twofold branched cover of $D^4$ along $B$, then there are embeddings of lattices of equal rank: 
$$
\begin{array}{ll}
(\mathbb Z^{n_+}\oplus \mathbb Z^2, G_+\oplus Q_{\tilde X})  \hookrightarrow 	(H_2(Z_+;\mathbb Z) , Q_{Z_+})\cong (\mathbb Z^{n_++2},  \text{Id}), & \quad  \text{ if $\tilde X$ is negative definite}, \cr
(\mathbb Z^{n_-}\oplus \mathbb Z^2, G_-\oplus Q_{\tilde X})  \hookrightarrow 	(H_2(Z_-;\mathbb Z) , Q_{Z_-})\cong (\mathbb Z^{n_-+2},  -\text{Id}),  & \quad  \text{ if $\tilde X$ is positive definite}. 
\end{array}  
$$
Here $(\mathbb Z^{n+2},\pm$Id$)$ are the standard positive/negative definite lattice and the integer $n_\pm$ is the number of white regions minus 1, in the checkerboard colorings for $G_\pm$. 
\end{theorem}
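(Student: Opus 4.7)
The proof will closely parallel that of Theorem \ref{DonaldsonsEmbeddingTheoremForMobiusBands}, with Theorem \ref{JabukaKelly} supplying the key new ingredient needed for the Klein Bottle case. The plan is to form the closed $4$-manifolds $Z_\pm$ of equation \eqref{EquationConstructionOfTheClosedFourManifold}, observe that the hypothesis on the definiteness of $\tilde X$ makes $Z_\pm$ smooth, closed, and definite of the appropriate sign, then invoke Donaldson's diagonalization theorem. The $b_2$ count comes from $b_2(\hat X_\pm)=n_\pm$ (by Theorem \ref{TheoremGordenLitherland}) and $b_2(\tilde X)=b_1(B)=2$.

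First I would verify that $\partial\hat X_\pm$ and $\partial\tilde X$ are canonically identified with the double cover $\Sigma_2(K)$ of $S^3$ branched along $K$, so that $Z_\pm = \hat X_\pm\cup_{\Sigma_2(K)}(-\tilde X)$ is indeed a smooth closed $4$-manifold. With the sign convention from equation \eqref{EquationConstructionOfTheClosedFourManifold}, $Z_+$ is positive definite when $\tilde X$ is negative definite, and $Z_-$ is negative definite when $\tilde X$ is positive definite. Donaldson's theorem then identifies $(H_2(Z_\pm;\mathbb Z)/\text{Tor}, Q_{Z_\pm})$ with the standard lattice $(\mathbb Z^{n_\pm+2},\pm\text{Id})$.

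The Mayer--Vietoris sequence for the decomposition $Z_\pm = \hat X_\pm\cup (-\tilde X)$, together with the facts that $H_2(\Sigma_2(K);\mathbb Z)=0$ and that $H_1(\Sigma_2(K);\mathbb Z)$ is finite of order $\det K$, yields a short exact sequence
\[
0\to H_2(\hat X_\pm;\mathbb Z)\oplus H_2(-\tilde X;\mathbb Z)\to H_2(Z_\pm;\mathbb Z)\to C\to 0,
\]
where $C$ is a subgroup of $H_1(\Sigma_2(K);\mathbb Z)$, and therefore finite of order dividing $\det K$. Injectivity of the inclusion-induced map on $H_2$, combined with the lattice identification $(H_2(\hat X_\pm;\mathbb Z)/\text{Tor}, Q_{\hat X_\pm}) \cong (\mathbb Z^{n_\pm}, G_\pm)$ from Theorem \ref{TheoremGordenLitherland}, delivers the desired embedding of $(\mathbb Z^{n_\pm}\oplus \mathbb Z^2, G_\pm \oplus Q_{\tilde X})$ into $(\mathbb Z^{n_\pm+2}, \pm\text{Id})$. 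Equality of ranks is automatic from the computation of $b_2(Z_\pm)$.

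The main obstacle, beyond the bookkeeping required to track orientations so that $Z_\pm$ has the claimed definiteness, is the very existence of definiteness for $\tilde X$: for a punctured Klein Bottle this is no longer an elementary congruence check as in the M\"obius band setting of \cite{GilmerLivingston}, and is the substantive content of Theorem \ref{JabukaKelly}. Once $\tilde X$ is known to be definite, the Mayer--Vietoris/Donaldson machinery runs exactly as in the proof of Theorem \ref{DonaldsonsEmbeddingTheoremForMobiusBands}, with only the rank of the new summand changing from $1$ to $2$.
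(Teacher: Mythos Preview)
Your argument is correct and matches the paper's approach: the paper does not give a separate proof of this theorem but indicates that it follows from the same Mayer--Vietoris and Donaldson argument set up in Section~\ref{SectionOnDonaldsonsTheorem} for Theorem~\ref{DonaldsonsEmbeddingTheoremForMobiusBands}, with $b_2(\tilde X)=2$ in place of $1$. One small point of emphasis: Theorem~\ref{JabukaKelly} is not actually needed in the proof of Theorem~\ref{DonaldsonsEmbeddingTheoremForKleinBottles} as stated, since definiteness of $\tilde X$ is taken as a hypothesis there; Theorem~\ref{JabukaKelly} enters only in Corollary~\ref{CorollaryAboutCorankTwoEmbeddingOfLattices}, where it supplies a checkable condition guaranteeing that hypothesis.
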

%%%
%%%
The $\mathbb Z^2$-summands in the statement of Theorem \ref{DonaldsonsEmbeddingTheoremForKleinBottles} represent the free part of $H_2(\tilde X;\mathbb Z)$.  Combining Theorem \ref{DonaldsonsEmbeddingTheoremForKleinBottles} with Theorem \ref{JabukaKelly}, we obtain this useful consequence:
%%%
%%%
\begin{corollary} \label{CorollaryAboutCorankTwoEmbeddingOfLattices}
Pick an alternating projection of an alternating knot $K$ with 
$$\sigma(K) + 4\cdot \text{Arf}(K)\equiv 4\pmod 8.$$
Let $G_\pm$ be the associated positive/negative definite Goeritz matrices. If the lattice embeddings 
$$
(\mathbb Z^{n_+}, G_+)  \hookrightarrow 	(\mathbb Z^{n_++2},  \text{Id}),  \quad \text{ and } \quad (\mathbb Z^{n_-}, G_-)  \hookrightarrow 	(\mathbb Z^{n_-+2},  -\text{Id}) 
$$
do not exist, then $K$ cannot bound a smoothly and properly embedded punctured Klein Bottle in $D^4$. %Here again the integer $n$ is the number of white regions minus 1, in the checkerboard colorings used to obtain $G_\pm$. 
\end{corollary}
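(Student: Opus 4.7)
The plan is to argue by contraposition: assume $K$ bounds a smoothly and properly embedded punctured Klein Bottle $B \subset D^4$, and produce one of the forbidden lattice embeddings. The hypothesis $\sigma(K) + 4\cdot\text{Arf}(K) \equiv 4 \pmod 8$ is precisely what allows Theorem \ref{JabukaKelly} to be applied, so I would begin by invoking it to conclude that the twofold cover $\tilde X$ of $D^4$ branched along $B$ is definite. This gives a dichotomy: either $\tilde X$ is negative definite or $\tilde X$ is positive definite.

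In the first case, I would feed this into Theorem \ref{DonaldsonsEmbeddingTheoremForKleinBottles} to get an embedding of lattices
$$(\mathbb Z^{n_+} \oplus \mathbb Z^2,\; G_+ \oplus Q_{\tilde X}) \hookrightarrow (\mathbb Z^{n_++2}, \text{Id}).$$
Restricting this embedding to the first $\mathbb Z^{n_+}$ summand produces an embedding $(\mathbb Z^{n_+}, G_+) \hookrightarrow (\mathbb Z^{n_++2}, \text{Id})$, which is one of the embeddings assumed not to exist, a contradiction. In the second case (with $\tilde X$ positive definite), the same theorem yields $(\mathbb Z^{n_-} \oplus \mathbb Z^2, G_- \oplus Q_{\tilde X}) \hookrightarrow (\mathbb Z^{n_-+2}, -\text{Id})$, and restricting to the first factor contradicts the non-existence of $(\mathbb Z^{n_-}, G_-) \hookrightarrow (\mathbb Z^{n_-+2}, -\text{Id})$.

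There is no real obstacle here; the work has all been done in the two preceding theorems, and the corollary is essentially the logical contrapositive packaged for use in practice. The only conceptual point worth noting is that restricting a lattice embedding to a direct summand of the domain always produces a lattice embedding of that summand, which is immediate from the definition. Thus in either branch of the dichotomy provided by Theorem \ref{JabukaKelly}, one of the two hypothesized non-embeddings is violated, so the assumption that $K$ bounds a smoothly and properly embedded punctured Klein Bottle in $D^4$ must fail.
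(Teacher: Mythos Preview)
Your proof is correct and follows exactly the approach the paper intends: the corollary is stated as the combination of Theorem \ref{JabukaKelly} (giving definiteness of $\tilde X$ under the $\sigma+4\text{Arf}\equiv 4$ hypothesis) with Theorem \ref{DonaldsonsEmbeddingTheoremForKleinBottles} (producing the lattice embedding once definiteness is known), and you have spelled out precisely this contrapositive argument.
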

%%%
%%%
\begin{remark}
The main utility of  Theorems \ref{DonaldsonsEmbeddingTheoremForMobiusBands} and \ref{DonaldsonsEmbeddingTheoremForKleinBottles} is to impose a restriction on the lattice embeddings. Indeed the  'lattice' restriction coming from said theorems  is so strong that there are usually only a handful of such embeddings if they exists at all, often times they are unique.  We illustrate this with the next example, which also highlights our techniques for finding all such embeddings. 
\end{remark}
%%%
%%% 
%\begin{figure}
%	\includegraphics[width=6cm]{12a-1022BandMove}
%	\caption{A non-orientable band move on a (non-equivariant) diagram of $K=12a_{1022}$ (courtesy of KnotInfo \cite{KnotInfo}) yielding the slice knot $12a_{377}$, thereby showing that $\gamma_4(12a_{1022})=1$.}
%\label{BandMoveFor12a-1022} 
%\end{figure}
%%%
%%%
%%%%%%%%%%%%%%%
%%%%%%%%%%%%%%%
\begin{example} \label{ExampleEmbeddingOfTheKnot12a1019}
Consider again the knot $K=12a_{1019}$ and its Goeritz matrix $G_-$ as in  Example \ref{ExampleOfTheGoeritzMatricesFor12a1019}. Since $K$ is slice, it bounds a smooth M\"obius band $M$ in $D^4$. If $\tilde X$ (the twofold cover of $D^4$ branched along $M$) were positive definite, then according to Corollary \ref{CorollaryAboutCorankOneEmbeddingOfLattices} there would exist an embedding of lattices $\varphi : (\mathbb Z^6, G_-)\to (\mathbb Z^7, -\text{Id})$, which we now proceed to determine. Note that we don't know if $\tilde X$ is positive definite, this is a working hypothesis for now. 
	
If we write $G_-=[g_{i,j}]_{i,j=1,\dots, 6}$ and recall that $\mathbb Z^6$ is the free Abelian group generated by $\{X_1, \dots, X_6\}$ (cf. Part (a)  of Figure \ref{Figure12a-1019Checkerboards}), then $G_-(X_i, X_j) = g_{i,j}$. For brevity of notation, we shall simply write $X_i \cdot X_j$ to mean $G_-(X_i, X_j)$. Similarly let $\{Y_1, \dots, Y_7\}$ be the standard basis for $(\mathbb Z^7,-\text{Id})$, and write $Y_i\cdot Y_j$ to mean -Id$(Y_i, Y_j)$. Thus $Y_i \cdot Y_j = -\delta _{i,j}$. With this notation in place, we seek to find all possible $\varphi$ (up to post-composition by an isomorphism of $\mathbb Z^7$) such that $\varphi(X_i) \cdot \varphi(X_j) = X_i \cdot X_j$ for $i,j =1, \dots, 6$.

If we write $\varphi(X_i) = \sum _{k=1}^7 a_{i,k}Y_k$, $a_{i,k}\in \mathbb Z$, then all the coefficients $a_{i,k}$ are constraint to come from the set $\{-1, 0, 1\}$. For $i=4, 5, 6$ this is easy to see as $X_i\cdot X_i = -3 =-(a_{i,1}^2+\dots +a_{i,7}^2)$. For $i=1,2,3$, and since $X_i\cdot X_i=-4$, it is conceivable that one could get $\varphi(X_i) = 2Y_k$ for some index $k$. However, then $X_i\cdot X_j = \varphi(X_i)\cdot \varphi(X_j) = 2Y_k\cdot \varphi(X_j)$ would be an even integer for every index $j\in \{1,\dots , 6\}$.  By inspection one can verify that for each $i=1,2, 3$ there exists a $j$ such that $X_i\cdot X_j=1$. It follows that each $\varphi(X_i)$ is a linear combination of $\{Y_1,\dots, Y_7\}$ with coefficients from $\{-1, 0, 1\}$. This observation makes solving the equations below significantly easier. 

Turning now to the determination of $\varphi$, note that since $X_4\cdot X_4=-3$, then up to isomorphism we must have,  
$$\varphi(X_4) = Y_1+Y_2+Y_3.$$
Since $X_5\cdot X_4=1$, then $\varphi(X_5)$ shares 1 or 3 basis elements with $\varphi(X_4)$. If it shared 3 basis elements, it would follow that $X_5\cdot X_i \equiv X_4\cdot X_i \pmod 2$ for every $i=1,\dots, 6$, a congruence that fails for $i=1$. Thus $\varphi(X_5)$ and $\varphi(X_4)$ share exactly 1 basis element, and without loss of generality we can assume that the shared element is $Y_1$, leading to 
$$\varphi(X_5) = -Y_1+Y_4+Y_5.$$
By the same argument, since $X_6\cdot X_4=1=X_6\cdot X_5$, $\varphi(X_6)$ must share exactly one basis element with $\varphi(X_4)$ and one basis element with $\varphi(X_5)$. That shared basis element cannot be $Y_1$ since in that case we would get $\varphi(X_6) \cdot \varphi (X_4) = - \varphi(X_6)\cdot \varphi(X_5)$, a contradiction since $\varphi(X_6) \cdot \varphi (X_4) = 1= \varphi(X_6)\cdot \varphi(X_5)$. Without loss of generality we can assume that $\varphi(X_6)$ shares $Y_2$ with $\varphi(X_4)$ and that $\varphi(X_6)$ shares $Y_4$ with $\varphi(Y_5)$, leading to 
$$\varphi(X_6) = -Y_2-Y_4+Y_6.$$
Write $\varphi(X_1) = \sum _{k=1}^7 a_k Y_k$ with $a_k\in \{-1,0,1\}$ to be determined from the intersection pairings: $X_1\cdot X_1=-4$, $X_1\cdot X_4=1$, $X_1\cdot X_5=0$ and $X_1\cdot X_6=0$. These lead to 
$$a_1^2+\dots +a_7^2=4,\quad  -a_1-a_2-a_3 = 1, \qquad a_1-a_4-a_5=0, \quad a_2+a_4-a_6=0. $$
This is easy to solve and leads to these 6 solutions (up to isomorphism) :
$$
\begin{array}{clccl}
(i) & \varphi(X_1) = Y_1-Y_2-Y_3+Y_4, & \qquad \qquad & (iv)& \varphi(X_1)  = -Y_2 +Y_4-Y_5-Y_7, \cr
(ii) & \varphi(X_1) = -Y_1 +Y_2-Y_3-Y_4, & \qquad \qquad & (v)& \varphi(X_1)  = -Y_3+Y_4-Y_5+Y_6, \cr
(iii) & \varphi(X_1) = -Y_1 -Y_4-Y_6-Y_7, & \qquad \qquad & (vi)& \varphi(X_1)  = -Y_3-Y_4+Y_5-Y_6.
\end{array}
$$
Similarly we write $\varphi(X_2) = \sum _{k=1}^7 b_k Y_k$ with $b_k\in \{-1,0,1\}$, subject to the relations $X_2\cdot X_2=-4$, $X_2\cdot X_4=0$, $X_2\cdot X_5=1$, $X_2\cdot X_6=0$, $X_2\cdot X_1=1$, the latter equation of course has to be evaluated for each of the cases (i)--(iv):
$$
\begin{array}{clccl}
	(i) & \varphi(X_2) = Y_2-Y_3-Y_4-Y_7, & \qquad \qquad & (iv)(b)& \varphi(X_2)  = Y_2-Y_3-Y_4-Y_7,  \cr
	(ii) & \text{No solution for } \varphi(X_2), & \qquad \qquad &  (v)(a) & \varphi(X_2)  = -Y_2+Y_3-Y_5-Y_6, \cr
	(iii)(a) & \varphi(X_2) = Y_2 -Y_3-Y_5+Y_6, & \qquad \qquad & (v)(b)& \varphi(X_2)  = Y_1-Y_2-Y_6+Y_7,  \cr
	(iii)(b) & \varphi(X_2) = Y_1 -Y_2-Y_6+Y_7, & \qquad \qquad &  (vi)(a)& \varphi(X_2)  = -Y_2+Y_3-Y_5-Y_6,\cr
	(iv)(a) & \varphi(X_2)  = -Y_1 +Y_2-Y_4-Y_5, & \qquad \qquad &(vi)(b)& \varphi(X_2)  = Y_2-Y_3-Y_5+Y_6.
\end{array}
$$
The 6 possibilities for $\varphi(X_1)$ have increased to give us 9 possibilities for $\varphi(X_2)$. Writing $\varphi(X_3) = \sum _{k=1}^7c_k Y_k $ with $c_k \in \{-1,0,1\}$, we solve the corresponding system of equations and find:
$$
\begin{array}{clccl}
	(i) & \text{No solution for } \varphi(X_3), & \qquad \qquad & (iv)(b)& \text{No solution for } \varphi(X_3),  \cr
	(ii) & \text{No solution for } \varphi(X_2), & \qquad \qquad &  (v)(a) & \varphi(X_3)= Y_1-Y_3+Y_5-Y_6, \cr
	(iii)(a) & \text{No solution for } \varphi(X_3), & \qquad \qquad & (v)(b)& \text{No solution for } \varphi(X_3),  \cr
	(iii)(b) & \text{No solution for } \varphi(X_3), & \qquad \qquad &  (vi)(a)& \text{No solution for } \varphi(X_3),\cr
	(iv)(a) & \text{No solution for } \varphi(X_3), & \qquad \qquad &(vi)(b)& \varphi(X_3)  = -Y_1+Y_3-Y_5-Y_6.
\end{array}
$$
Thus only Cases (v)(a) and (vi)(b) are viable, leading to the two embeddings of lattices $\varphi_k:(\mathbb Z^6, G_-) \to (\mathbb Z^7, -\text{Id})$, $k=1,2$, given by 
\begin{equation} \label{EquationEmbeddingFor12a1019}
\begin{array}{clccl}
	\varphi_1(X_1) & = -Y_3-Y_4+Y_5-Y_6, & \qquad \qquad & \varphi_1(X_4) & = Y_1+Y_2+Y_3,  \cr
	\varphi_1(X_2) & = Y_2-Y_3-Y_5+Y_6, & \qquad \qquad &  \varphi_1(X_5) &  = -Y_1+Y_4+Y_5, \cr
	\varphi_1(X_3) & = -Y_1+Y_3-Y_5-Y_6, & \qquad \qquad & \varphi_1(X_6) & = -Y_2-Y_4+Y_6. \cr
	&&&& \cr
	\varphi_2(X_1) & = -Y_3+Y_4-Y_5+Y_6, & \qquad \qquad & \varphi_2(X_4) & = Y_1+Y_2+Y_3,  \cr
	\varphi_2(X_2) & = -Y_2+Y_3-Y_5-Y_6, & \qquad \qquad &  \varphi_2(X_5) &  = -Y_1+Y_4+Y_5, \cr
	\varphi_2(X_3) & = Y_1-Y_3+Y_5-Y_6, & \qquad \qquad & \varphi_2(X_6) & = -Y_2-Y_4+Y_6.
\end{array}
\end{equation}
It is not hard to see that these two embeddings are not isomorphic. In light of Corollary \ref{CorollaryAboutCorankOneEmbeddingOfLattices} and the fact that $12a_{1019}$ is slice, it is unsurprising that $\varphi$ exists. The utility of the present example is to demonstrate that the 'lattice condition' of the embedding from Corollary \ref{CorollaryAboutCorankOneEmbeddingOfLattices}, severely restricts the possible embeddings. 

Since $G_+ = -G_-$ (cf. Example \ref{ExampleOfTheGoeritzMatricesFor12a1019}), it follows that the only embeddings of lattices $\psi:(\mathbb Z^6, G_+) \to (\mathbb Z^7, \text{Id})$ are again given by $\psi= \varphi_k$, $k=1,2$ as in \eqref{EquationEmbeddingFor12a1019}. 
\end{example}
%%%%%%%%%%%%%%%%%%%%%%%%%%%%%%%%%%%%%%%%%%%%%
%%%%%%%%%%%%%%%%%%%%%%%%%%%%%%%%%%%%%%%%%%%%%
%%%%%%%%%%%%%%%%%%%%%%%%%%%%%%%%%%%%%%%%%%%%%
\subsection{The equivariant setting} \label{SectionEquivariantSetting}
%%%
%%%
In this section we re-examine Theorems \ref{DonaldsonsEmbeddingTheoremForMobiusBands} and \ref{DonaldsonsEmbeddingTheoremForKleinBottles} and Corollaries \ref{CorollaryAboutCorankOneEmbeddingOfLattices} and \ref{CorollaryAboutCorankTwoEmbeddingOfLattices} in the equivariant setting, when the knot under consideration is periodic. 

Thus, let $K$ be a $p$-periodic knot and $f:S^3\to S^3$ be an order $p$, orientation preserving diffeomorphism of $S^3$ realizing the symmetry of $K$. A {\em $p$-periodic spanning surface} (orientable or not) {\em for $K$ in $D^4$} is a pair $(\Sigma , F)$ consisting of a spanning surface $\Sigma$ for $K$, smoothly and properly embedded in $D^4$, and an orientation preserving diffeomorphism $F:D^4\to D^4$ of order $p$ such that $F|_{S^3} = f$ and $F(\Sigma ) = \Sigma$.  The next proposition shows that every periodic knot possesses  periodic spanning surfaces in $D^4$. 
%%%
%%%

%%%
%%%
\begin{proposition} \label{PropositionExtindingPeriodicityToTheFourBall}
Let $f:S^3\to S^3$ be an orientation preserving diffeomorphism of order $p$, with non-empty fixed-point set. 
\begin{itemize}
\item[(i)] There exists an orientation preserving diffeomorphism $F:D^4\to D^4$ of order $p$, with $F|_{S^3} = f$.
\item[(ii)] A $p$-periodic knot $K$ in $S^3$ admits both orientable and non-orientable $p$-periodic spanning surfaces $(\Sigma, F)$ in $D^4$.  
\end{itemize} 
\end{proposition}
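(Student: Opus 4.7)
The two parts can be attacked in order: (i) is a standard smooth extension problem for finite cyclic actions on $S^3$, and (ii) then follows from (i) combined with Edmonds's theorem and some equivariant surgery in $D^4$. Both parts lean on the positive resolution of the Smith conjecture to reduce to a linear model.

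\textbf{For part (i).} The plan is to reduce to the case of an orthogonal rotation, which extends linearly. By the positive resolution of the smooth Smith conjecture (together with the smoothing of finite cyclic actions on $S^3$), there exists $h\in\mathrm{Diff}^+(S^3)$ such that $hfh^{-1}=R$, where $R$ is rotation of $S^3\subset\mathbb R^2\oplus\mathbb R^2$ by $2\pi/p$ on the first factor and the identity on the second. The map $R$ extends linearly to an order-$p$ orthogonal rotation $\tilde R\colon D^4\to D^4$. By Cerf's theorem, $\pi_0\mathrm{Diff}^+(S^3)=0$, so $h$ is smoothly isotopic to the identity via some isotopy $h_t$; use $h_t$ on the annular collar $\{\tfrac12\le |x|\le 1\}$ and the identity on $\{|x|\le\tfrac12\}$ to build a smooth extension $H\colon D^4\to D^4$ of $h$. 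Then $F:=H^{-1}\circ\tilde R\circ H$ is orientation-preserving, of order $p$, and restricts to $f$ on $S^3$.

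\textbf{For part (ii), orientable case.} By Edmonds's theorem, $K$ bounds a smoothly embedded $f$-invariant Seifert surface $\Sigma_3\subset S^3$. Given $F$ from (i), the equivariant tubular neighborhood theorem yields an $F$-equivariant collar $c\colon S^3\times[0,\eps)\hookrightarrow D^4$ with $F\circ c=c\circ(f\times\mathrm{id})$. Pick any smooth $\phi_0\colon\Sigma_3\to[0,\eps/2p]$ vanishing exactly on $\partial\Sigma_3=K$, and set $\phi:=\tfrac1p\sum_{i=0}^{p-1}\phi_0\circ f^i$. Since $K$ is $f$-invariant, $\phi$ still vanishes precisely on $K$, and it is now $f$-invariant. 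Then the push-in
\[
\hat\Sigma\;:=\;\{\,c(x,\phi(x))\;:\;x\in\Sigma_3\,\}
\]
is a smoothly and properly embedded, orientable, $F$-invariant spanning surface for $K$ in $D^4$.

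\textbf{For part (ii), non-orientable case.} Starting from $\hat\Sigma$, attach crosscaps equivariantly. Pick a point $y\in\hat\Sigma$ off the fixed set of $F$ (which has codimension $\ge 2$ in $D^4$) and choose a small $4$-ball $B$ around $y$ small enough that $\{F^i(B)\}_{i=0}^{p-1}$ are pairwise disjoint and each meets $\hat\Sigma$ in a single disk. Inside $B$, replace that disk with an unknotted M\"obius band in $B$ having the same boundary circle; propagate the replacement by $F^i$ to each $F^i(B)$. The resulting surface $\Sigma'$ is $F$-invariant and has $p\ge 2$ added crosscaps, so it is non-orientable, properly embedded, and bounds $K$.

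\textbf{Main obstacle.} The bulk of the work is smooth-equivariant bookkeeping rather than genuinely new content. The only point that requires real care is smoothness of $F$ at the center of $D^4$ (handled by damping $H$ to the identity on an interior ball so that $F$ agrees with the smooth linear map $\tilde R$ there); after that, the averaging trick for $\phi$, the equivariant tubular neighborhood theorem, and placing the crosscap ball $B$ off the fixed set are all standard, and each construction is propagated along the $\mathbb Z/p$-orbit to secure equivariance.
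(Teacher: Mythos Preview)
Your proof is correct and follows essentially the same route as the paper: linearize $f$ via the Smith conjecture, extend the conjugating diffeomorphism to $D^4$ (the paper cites Hatcher, you use Cerf plus an explicit collar), invoke Edmonds for an equivariant Seifert surface, push in equivariantly, and then make the surface non-orientable by an equivariant local modification. Two minor remarks: the Smith conjecture only gives conjugacy to a rotation by $2\pi q/p$ for some $q$ coprime to $p$ (not necessarily $q=1$), though this changes nothing in your construction; and the paper carries out the non-orientable step in $S^3$---attaching $p$ half-twisted bands to the equivariant Seifert surface before pushing in---rather than adding crosscaps inside $D^4$ afterward as you do, but both variants are equally valid.
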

%%%
%%%
\begin{proof}
(i)  Viewing $\mathbb R^4$ as $\mathbb C^2$, let $\rho_q:\mathbb C^2 \to \mathbb C^2$ be the rotation about the axis $\{0\}\times \mathbb C$ by an angle $2\pi q/p$, that is let $\rho_q(z_1, z_2) = (e^{2\pi q i /p} z_1, z_2)$. Here $q$ is an integer in $\{1,\dots, p-1\}$ relatively prime to $p$. Note that $\rho_q$ is a smooth map, inducing smooth maps (of the same name)  $\rho_q: S^3 \to S^3$ and $\rho_q:D^4\to D^4$ by restriction (where, as is customary, we view $S^3$ and $D^4$ as the unit sphere and unit ball in $\mathbb C^2$ respectively, centered at the origin).  
	
By the resolution of the Smith Conjecture \cite{BassMorgan}, any finite order, orientation preserving diffeomorphism $f:S^3\to S^3$ with nonempty fixed-point set, is up to conjugation equal to a rotation $\rho_q$ . Specifically, there exists a diffeomorphism $\phi:S^3\to S^3$ such that $f= \phi^{-1}\circ \rho_q \circ \phi$ for some choice of $q$ as above. 

%Let $H:S^3\times [\frac{1}{2},1]\to S^3$ be an isotopy with $H_1= \phi$ and $H_{1/2}=\text{id}_{S^3}$ (if $\phi$ is orientation preserving), or $H_{1/2}=\pi$, where $\pi:S^3\to S^3$ is induced by $\pi:\mathbb C^2\to \mathbb C^2$ given by $\pi(z_1, z_2) = (z_1, \bar z_2)$ (if $\phi$ is orientation reversing). 

Let $\Phi:D^4\to D^4$ be any diffeomorphism with $\Phi|_{S^3}=\phi$ \cite{Hatcher}, and let $F=\Phi\circ \rho_q\circ \Phi^{-1}$. Clearly $F$ is smooth and orientation preserving (since $\rho_q$ preserves orientation), and $F^k =  \Phi \circ (\rho_q)^k \circ \Phi^{-1}$, showing that the order of $F$ is $p$. It is obvious that $F|_{S^3} = f$. 
\vskip3mm
%%%
%%% 	
\noindent (ii) Let $S\subset S^3$ be a spanning surface for $K$, orientable or not, with $f(S) = S$. The existence of equivariant orientable spanning surfaces follows from \cite{Edmonds}. A non-orientable equivariant spanning surface can be obtained from an orientable equivariant surface $S$ by attaching a small half-twisted band $b$ to the boundary of $S$ (performing a Reidemeister move of Type I on the knot $K$), followed by the attachment of the bands $f^i(b)$, $i=2, \dots, p-1$ to $\partial S$.      

Let $f=\varphi \circ \rho_q\circ \varphi^{-1}$ and $F=\Phi\circ\rho_q\circ \Phi^{-1}$ be as in Part (i) of the proof. Let $S'=\phi^{-1}(S)$, so that $\rho_q(S') = S'$. Let $\Sigma '\subset D^4$ be obtained from $S'$ by pushing the interior of $S'$ radially into $D^4$. Note that then $\rho_q(\Sigma') = \Sigma'$. Let $\Sigma = \Phi(\Sigma')$, then $\partial \Sigma  = K$ and $F(\Sigma ) = \Sigma$. Lastly, $\Sigma$ is orientable or not, depending on whether $S$ is orientable or not.  
\end{proof}

Next we turn to the manifolds $Z=Z_\pm$ from \eqref{EquationConstructionOfTheClosedFourManifold} in the equivariant setting. Let  $f:S^3\to S^3$ be an orientation preserving, order $p>1$ diffeomorphism with Fix$(f)\ne \emptyset$, and let $K$ be a knot in $S^3$ with $f(K)=K$. Let $\Sigma _1$ be the black surface in a checkerboard coloring of an $f$-equivariant diagram of $K$, and let $\hat \Sigma_1\subset D^4$ and $F_1:D^4\to D^4$ be constructed from these as in the proof of Proposition \ref{PropositionExtindingPeriodicityToTheFourBall}. Suppose that $K$ possesses a non-orientable $p$-periodic spanning surface $(\Sigma_2, F_2)$ in $D^4$, then  
$$\partial \hat \Sigma _1 = K = \partial \Sigma _2, \quad F_1(\hat \Sigma _1) = \hat \Sigma _1, \quad F_2(\Sigma _2) = \Sigma _2, \quad \text{ and } \quad F_1\big|_{S^3} = f = F_2\big|_{S^3}.$$ 
We glue the surfaces $\hat \Sigma _1$ and $\Sigma _2$ along their common boundary $K$ to obtain the embedded closed surface $\Sigma\subset S^4$, that is we let $(S^4, \Sigma ) = (D^4, \hat \Sigma _1)\cup _{(S^3, K)}(-D^4, \Sigma _2)$. Then $Z$ is the twofold cylcic cover of $S^4$ branched along $\Sigma$. We let $F:S^4\to S^4$ be the diffeomorphism that restricts to $F_1$ and $F_2$ on the two copies of $D^4$, note that then $\Sigma$ is preserved by $F$. We wish to show that $F$ lifts to a diffeomorphism $\tilde F:Z \to Z$ that covers $F$. Our proof of this shall rely on Theorem 9.2 on page 64 from \cite{Bredon}.
%%%
\begin{theorem}[Bredon \cite{Bredon}, Page 64] \label{BredonsResult}
	Let $G$ be a Lie group (not necessarily connected) acting on a connected and locally arcwise connected space $Y$, and let $\pi:\tilde Y\to Y$ be a covering space of $Y$. Let $\tilde y_0\in \tilde Y$ project to $y_0\in Y$ and suppose that $G$ leaves $y_0$ stationary. Then there exists a (unique) $G$-action on $\tilde Y$ leaving $\tilde y_0$ stationary and covering the given action on $Y$ if and only if the subgroup $\pi_\#(\pi_1(\tilde Y,\tilde y_0))$ is invariant under the action of $G$ on $\pi_1(Y, y_0)$. 
\end{theorem}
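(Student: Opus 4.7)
The plan has three stages: first, construct continuous lifts $\tilde g : \tilde Y \to \tilde Y$ for each individual $g \in G$ via the classical covering-space lifting criterion; second, use uniqueness of lifts to assemble them into a group homomorphism $g \mapsto \tilde g$; third, upgrade this set-theoretic action to a jointly continuous one by applying the lifting criterion globally to the orbit map $G \times \tilde Y \to Y$.

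Necessity I would dispatch immediately: if the lifted action exists and fixes $\tilde y_0$, then each $\tilde g$ is itself a continuous lift of $g \circ \pi : \tilde Y \to Y$, so the lifting criterion forces $g_*(\pi_\#\pi_1(\tilde Y, \tilde y_0)) \subseteq \pi_\#\pi_1(\tilde Y, \tilde y_0)$, and the same applied to $g^{-1}$ yields the asserted invariance.

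For sufficiency, the hypothesis on $g_*$ is precisely the condition the covering-space lifting criterion requires to produce a unique continuous $\tilde g : (\tilde Y, \tilde y_0) \to (\tilde Y, \tilde y_0)$ lifting $g \circ \pi$; applying the criterion also to $g^{-1}$ shows $\tilde g$ is a homeomorphism. The group law then comes for free from uniqueness of lifts, since both $\widetilde{gh}$ and $\tilde g \circ \tilde h$ are continuous lifts of $gh \circ \pi$ sending $\tilde y_0$ to itself, and likewise $\tilde e = \mathrm{id}$. Thus $g \mapsto \tilde g$ is a group homomorphism into $\mathrm{Homeo}(\tilde Y)$.

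The hard part will be joint continuity of $\mu : G \times \tilde Y \to \tilde Y$, $\mu(g, \tilde y) = \tilde g(\tilde y)$. My plan is to invoke the lifting criterion a third time, now for the continuous orbit map $\Phi : G \times \tilde Y \to Y$, $(g, \tilde y) \mapsto g \cdot \pi(\tilde y)$. I would first restrict to $G_0 \times \tilde Y$, where $G_0$ is the identity component of the Lie group $G$; this product is path-connected and locally path-connected, with $\pi_1(G_0 \times \tilde Y, (e, \tilde y_0)) \cong \pi_1(G_0) \times \pi_1(\tilde Y, \tilde y_0)$. The crucial check is that $\Phi_*$ lands in $\pi_\#\pi_1(\tilde Y, \tilde y_0)$: a loop $\alpha$ from the first factor is sent by $\Phi$ to $t \mapsto \alpha(t) \cdot y_0$, which is the constant loop at $y_0$ precisely because $G$ fixes $y_0$; a loop $\tilde\gamma$ from the second factor is sent to $\pi \circ \tilde\gamma$, which obviously lies in the subgroup. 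The lifting criterion therefore produces a continuous $\tilde\Phi : G_0 \times \tilde Y \to \tilde Y$ with $\tilde\Phi(e, \tilde y_0) = \tilde y_0$, and uniqueness of lifts identifies $\tilde\Phi$ with $\mu\big|_{G_0 \times \tilde Y}$. To extend to the remaining cosets $G_\alpha = g_\alpha G_0$, I would use $\mu(g_\alpha h, \tilde y) = \tilde g_\alpha(\mu(h, \tilde y))$: realized as the composition of left multiplication by $g_\alpha^{-1}$ on the $G$-factor, the already-established continuous $\mu\big|_{G_0 \times \tilde Y}$, and the homeomorphism $\tilde g_\alpha$, this is continuous on each coset component of $G \times \tilde Y$. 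Uniqueness of the resulting lifted action, subject to the basepoint condition, follows from uniqueness of the individual lifts.
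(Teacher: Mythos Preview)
The paper does not supply a proof of this statement: it is quoted verbatim as Theorem~9.2 from Bredon's book and invoked as a black box in the proof of the subsequent lifting theorem. There is therefore nothing in the paper to compare your argument against.

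That said, your proposal is a correct and essentially standard proof of the result. The necessity direction and the construction of the individual homeomorphisms $\tilde g$ via the lifting criterion, together with the group law from uniqueness of lifts, are routine. Your treatment of joint continuity is the only place where some care is needed, and you handle it properly: restricting to the identity component $G_0$ makes $G_0\times\tilde Y$ connected and locally path-connected, and the key observation that loops coming from the $G_0$-factor map under $\Phi$ to the constant loop at $y_0$ (because $y_0$ is fixed) is exactly what makes $\Phi_*$ land in $\pi_\#\pi_1(\tilde Y,\tilde y_0)$. One small point you might make explicit is why the continuous lift $\tilde\Phi$ agrees with $\mu$ on $G_0\times\tilde Y$: for any $g\in G_0$, a path $g_t$ from $e$ to $g$ gives $\Phi(g_t,\tilde y_0)\equiv y_0$, so $\tilde\Phi(g_t,\tilde y_0)\equiv\tilde y_0$, whence $\tilde\Phi(g,\tilde y_0)=\tilde y_0$ and uniqueness of lifts identifies $\tilde\Phi(g,-)$ with $\tilde g$. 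The extension to the remaining cosets is fine.
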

%%%
%%%

With this in mind we have the following lifting theorem of a cylic group action to the twofold cyclic branched cover. 
%%%
%%% 
\begin{theorem} \label{TheoremAboutLiftingOfDiffeomorphism}
Let $(S^4, \Sigma) = (D^4, \hat \Sigma _1)\cup _{(S^3, K)}(-D^4, \Sigma_2)$ and $F:S^4\to S^4$ be as described above. Let $f=F\big|_{S^3}:S^3\to S^3$ be the induced diffeomorphism on $S^3$ and assume that the fixed-point set Fix$(f)$ is non-empty. Let $\pi:Z \to S^4 $ be the twofold cyclic cover of $S^4$ branched along $\Sigma$.  Then there exists a diffeomorphism $\tilde  F:Z\to Z$ covering $F$. 
\end{theorem}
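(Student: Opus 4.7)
The plan is to first lift the action to the unbranched part of the cover via Bredon's Theorem~\ref{BredonsResult}, and then to extend the lift smoothly across the branch locus. Set $G = \langle F\rangle \cong \mathbb Z/p$, $Y_0 := S^4\setminus \Sigma$, and $Z_0 := Z\setminus \pi^{-1}(\Sigma)$, so that $\pi_0 := \pi|_{Z_0}\colon Z_0\to Y_0$ is a smooth (unbranched) connected double cover. The cover $\pi_0$ is canonically determined by the pair $(S^4,\Sigma)$: it corresponds to the kernel of the homomorphism $\lambda \colon \pi_1(Y_0, y_0)\to \mathbb Z/2$ sending each meridian of $\Sigma$ to $1$.

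To apply Theorem~\ref{BredonsResult}, I need (a) a $G$-fixed base point $y_0\in Y_0$, and (b) the $G$-invariance of $H := \ker\lambda \subset \pi_1(Y_0, y_0)$. For (a), the fixed-point set of $f$ is a circle in $S^3$ disjoint from $K$ (by the definition of a $p$-periodic knot recalled in the introduction), and any $y_0\in \mathrm{Fix}(f)\subset S^3\setminus K \subset Y_0$ is fixed by $F$. For (b), since $F(\Sigma)=\Sigma$, the induced automorphism $F_\ast$ on $\pi_1(Y_0, y_0)$ permutes the meridians of $\Sigma$, and therefore $\lambda\circ F_\ast = \lambda$, so $H$ is $F_\ast$-invariant. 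Bredon's theorem then produces a $G$-action on $Z_0$ covering the $G$-action on $Y_0$. Let $\tilde F_0\colon Z_0\to Z_0$ be the generator lifting $F|_{Y_0}$; as a lift of a smooth map through a smooth covering, $\tilde F_0$ is automatically smooth.

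It remains to extend $\tilde F_0$ to a smooth diffeomorphism $\tilde F\colon Z\to Z$. Since $\pi$ is injective on the branch locus, the only possible continuous extension is $\tilde F(\tilde q) = \pi^{-1}(F(\pi(\tilde q)))$ for $\tilde q\in \pi^{-1}(\Sigma)$, and this extension is smooth by the standard local structure of a double branched cover (locally modelled on the map $(z,w)\mapsto (z^2, w)$, in which any smooth diffeomorphism of the base preserving the branch locus $\{z=0\}$ lifts smoothly). Finally, $\tilde F^p$ covers $F^p = \mathrm{id}_{S^4}$ and fixes the chosen lift $\tilde y_0$ of $y_0$, so by the uniqueness of lifts $\tilde F^p = \mathrm{id}_Z$; hence $\tilde F$ is an order-$p$ diffeomorphism of $Z$ covering $F$, as required. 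The main subtlety is the subgroup-invariance check for Bredon's theorem, which is handled by the canonical, and thus automatically $F$-equivariant, definition of the homomorphism $\lambda$.
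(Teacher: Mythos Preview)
Your two-step strategy---lift on the complement via Bredon's Theorem~\ref{BredonsResult}, then extend across the branch locus---is exactly the paper's approach, and your verification of the $G$-invariance of $H=\ker\lambda$ (via the canonicity of $\lambda$) amounts to the paper's Hurewicz argument, since $H$ is the commutator subgroup of $\pi_1(Y_0)$ and hence characteristic.

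The gap is in your smoothness claim for the extension. The parenthetical assertion that in the local model $(z,w)\mapsto(z^2,w)$ ``any smooth diffeomorphism of the base preserving the branch locus $\{z=0\}$ lifts smoothly'' is false. Consider the local diffeomorphism $F(u,w)=(u+\bar u^{\,2},w)$, which preserves $\{u=0\}$ and has $dF_0=\mathrm{Id}$. A smooth lift through the branched cover would require a smooth germ $\alpha$ at $0$ with $\alpha(z)^2=z^2+\bar z^{\,4}$. Matching Taylor coefficients forces $\alpha(z)=\pm z+O(|z|^3)$, so every degree-$4$ term of $\alpha^2$ lies in $z\cdot(\text{degree }3)$ and in particular carries a positive power of $z$; the pure $\bar z^{\,4}$ term on the right can never be produced. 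Hence no smooth lift exists, and the continuous extension of your $\tilde F_0$ need not be smooth in general.

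What the paper does differently is to first choose an \emph{equivariant} tubular neighborhood $N$ of $\Sigma$ (Bredon, Theorem~VI.2.2), so that $F|_N$ is a bundle map of the normal disk bundle rather than an arbitrary diffeomorphism preserving $\Sigma$ setwise. The branched cover is then assembled as $Z=\tilde Y\cup\tilde N$ with $\tilde N\to N$ built fiberwise; the lift $\tilde F_1$ on $\partial\tilde N$ is fiber-preserving for the circle bundle $\partial\tilde N\to\Sigma$, and one extends over $\tilde N$ as a bundle map of the associated disk bundle. The finite order of $F$ is precisely what buys you this fiberwise linear normal form via the equivariant tubular neighborhood theorem---structure your local argument does not invoke and cannot do without.
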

%%%
%%%
\begin{proof}
Let $N=N(\Sigma)$ denote a closed equivariant tubular neighborhood of $\Sigma$ (the existence of which follows from Theorem VI.2.2 in \cite{Bredon}), note that $N$ and $\partial N$ are orientable manifolds. Let $Y=S^4-\text{Int}(N)$ so that $\partial Y = \partial N$, pick a basepoint $y_0\in Y\cap ( \text{Fix}(f)-\Sigma)$ and let 
$$H=\text{Ker}(\pi_1(Y,y_0)\to H_1(Y;\mathbb Z)).$$ 
Alexander duality and the fact the $\Sigma $ is non-orientable, imply that $H_1(Y;\mathbb Z) \cong \mathbb Z_2$ (the cyclic group of order 2), showing $H$ to be a normal subgroup of index 2. Let $\pi':\tilde Y \to Y$ be the (twofold) covering space corresponding to $H$. 

On the other hand, let $\rho :N\to \Sigma$ be the map endowing $N$ with the structure of a disk-bundle over $\Sigma$. By abuse of notation, we let $\rho$ also denote the restriction-induced map $\rho:\partial N \to \Sigma$, making $\partial N$ into a circle bundle over $\Sigma$. Let $\pi'':\tilde N \to N$ be the twofold cover of $N$ branched along $\Sigma$, constructed by taking fiber-wise twofold covers of disks (the fibers of $N$) branched along their centers. Then $\partial \tilde Y \cong \partial \tilde N$ and we can arrange that under this diffeomorphism $\pi'\big|_{\partial \tilde Y} = \pi''\big|_{\partial \tilde N}$.   

The twofold cyclic covering space $\pi:Z \to S^4$ of $S^4$ branched along $\Sigma$ can then be obtained as the fiber sum  $Z=\tilde Y \cup _{\partial \tilde Y=\partial \tilde N} \tilde N$, by letting $\pi=\pi'$ on $\tilde Y$ and letting $\pi=\pi''$ on $\tilde N$.    
\vskip1mm

The Hurewicz Theorem readily implies that $(F\big|_Y)_\#(H) \subset H$ so that Theorem \ref{BredonsResult} applies to the action of  $G=\{\text{Id}_{S^4}, F\big|_Y, (F\big|_Y)^2, \dots, (F\big|_Y)^{p-1}\}\cong \mathbb Z_p$ on $Y$. Thus, given a choice of $\tilde y_0 \in (\pi')^{-1}(y_0)$ there exists a unique lift of the action of $G$ on $Y$ to an action of $G$ on $\tilde Y$, and we let $\tilde F_1:\tilde Y\to \tilde Y$ be the diffeomorphism of $\tilde Y$ covering $F:Y\to Y$. 

Since $N$ is equivariant with respect to $F$ and since $\pi\circ \tilde F = F\circ \pi$, we see that $\tilde F_1$ preserves the fibers of the circle bundle $\tilde \rho :\partial \tilde Y = \partial \tilde N \to \Sigma$. As such, $\tilde F_1:\partial \tilde N\to \partial \tilde N$ can be extended to a smooth map $\tilde F_2: \tilde N \to \tilde N$, yielding a smooth map $\tilde F:Z\to Z$ that covers the diffeomorphism $F:S^4\to S^4$. 

Our proof is rather similar to, and inspried by, the proof of Proposition 12 in \cite{BoyleIssa}.
\end{proof}
%%%%%%%%%%%%%%%%
%%%%%%%%%%%%%%%%
%%%%%%%%%%%%%%%%	
Next we combine Theorems \ref{DonaldsonsEmbeddingTheoremForMobiusBands}, \ref{DonaldsonsEmbeddingTheoremForKleinBottles} and \ref{TheoremAboutLiftingOfDiffeomorphism} to obtain concrete obstructions to a periodic knot bounding an equivariant M\"obius band or an equivariant punctured Klein Bottle in $D^4$. Before stating the results, we need a few preliminaries. 

Let again $f:S^3\to S^3$ be an orientation preserving, order $p>1$ diffeomorphism with Fix$(f) \ne \emptyset$, and $K$ be an alternating knot with $f(K)=K$. Let $G_\pm$ be the positive/negative definite Goertiz matrices of $K$ associated to an $f$-equivarant knot diagram for $K$ with white regions $X_0, X_1, \dots X_n$. Let $\mathbb Z^n$ be the free Abelian group generated by $\{X_1, \dots, X_n\}$ and let by abuse of notation, $f:\mathbb Z^n\to \mathbb Z^n$ be the isomorphism induced by the action of $f$ on the set $\{X_1, \dots, X_n\}$. For instance, in the setting of Figure  \ref{Figure12a-1019Checkerboards}, the action of $f$ (given by counterclockwise rotation by $2\pi/3$) on the set $\{X_1, \dots, X_6\}$ is given by  
\begin{equation} \label{EquationActionOn12a1019}
X_1\stackrel{f}{\longrightarrow} X_2\stackrel{f}{\longrightarrow} X_3\stackrel{f}{\longrightarrow} X_1\quad \text{ and } \quad  X_4\stackrel{f}{\longrightarrow} X_5\stackrel{f}{\longrightarrow} X_6\stackrel{f}{\longrightarrow} X_4.	
\end{equation}
Let $\Sigma_1$ be the black surface of this equivariant knot diagram and let $\hat \Sigma_1$ be a push-in of the interior of $\Sigma _1$ into $D^4$. 
Let $F_1:D^4\to D^4$ be an extension of $f$ as constructed in Proposition \ref{PropositionExtindingPeriodicityToTheFourBall} and let $\hat F_1:\hat X_1\to \hat X_1$ be the lift of $F_1$ to the cyclic 2-fold cover $\hat X_1$ of $D^4$ branched along $\hat \Sigma _1$ (constructed as in Theorem \ref{TheoremAboutLiftingOfDiffeomorphism}). Then the action of $f$ on $\mathbb Z^n$, the free Abelian group generated by $\{X_1, \dots, X_n\}$, and the induced action $(\hat F_1)_\ast:H_2(\hat X_1;\mathbb Z) \to H_2(\hat X_1;\mathbb Z)$ are compatible under the isomorphism $\theta :\mathbb Z^n \to H_2(\hat X_1; \mathbb Z)/Tor$ from Theorem \ref{TheoremGordenLitherland}, in that they lead to the commutative diagram
%%%
\begin{equation} \label{EquationCommutativeDiagramforfandF1}
\xymatrix{
	\mathbb Z^n  \ar[d]_f  \ar[r]^-\theta & H_2(\hat X_1;\mathbb Z)/Tor \ar[d]^{(\hat F_1)_\ast} \\
	\mathbb Z^n      \ar[r]^-\theta     &H_2(\hat X_1;\mathbb Z)/Tor
}
\end{equation}
%%%
Verifying this claim is a matter of working through the definitions of $\theta$ and $\tilde F_1$ ensuring their compatibility, something that is carried out in full detail in Proposition 13 of \cite{BoyleIssa}. With this understood, we have:

%%%
\begin{corollary} \label{CorollaryEquivariantDiagramForMobiusBand}
Let $f:S^3\to S^3$, $\{X_1, \dots, X_n\}$ and $G_\pm$ be as above. If $K$ bounds an equivariant M\"obius band $M$ in $D^4$, there there exists a commutative diagram 
$$
\xymatrix{
	(\mathbb Z^n, G_+)  \ar[d]_f  \ar[r]^\varphi & (\mathbb Z^{n+1}, \text{Id}) \ar[d]_{\cong}^{\tilde F} \\
	(\mathbb Z^n, G_+) \ar[r]^\varphi          &(\mathbb Z^{n+1}, \text{Id})
}	\qquad \text{ or } \qquad 
\xymatrix{
	(\mathbb Z^n, G_-)  \ar[d]_f  \ar[r]^\varphi & (\mathbb Z^{n+1}, -\text{Id}) \ar[d]_{\cong}^{\tilde F} \\
	(\mathbb Z^n, G_-) \ar[r]^\varphi          &(\mathbb Z^{n+1}, -\text{Id})
}
$$
according to whether the twofold cyclic cover $\tilde X$ of $D^4$ branched along $M$, is positive or negative definite. The horizontal map $\varphi$ is an embedding of lattices, while $\tilde F$ is an isomorphism. 
\end{corollary}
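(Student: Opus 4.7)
The plan is to run the construction of $Z_\pm$ underlying Theorem \ref{DonaldsonsEmbeddingTheoremForMobiusBands} in the equivariant category, and then read off the required commutative square from the induced action. First, an equivariant M\"obius band $M$ comes equipped with an order $p$, orientation preserving diffeomorphism $F_2 : D^4 \to D^4$ satisfying $F_2|_{S^3} = f$ and $F_2(M) = M$. By Theorem \ref{TheoremAboutLiftingOfDiffeomorphism}, $F_2$ lifts to a diffeomorphism $\tilde F_2 : \tilde X \to \tilde X$ of the twofold cyclic cover of $D^4$ branched along $M$; similarly the equivariant extension $F_1 : D^4 \to D^4$ produced in Proposition \ref{PropositionExtindingPeriodicityToTheFourBall} lifts to $\hat F_1 : \hat X_\pm \to \hat X_\pm$. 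On the common boundary $\partial \hat X_\pm = \partial \tilde X$ both lifts cover $f$, so they glue to an orientation preserving self-diffeomorphism $\tilde F : Z_\pm \to Z_\pm$.

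Next, Donaldson's theorem identifies $(H_2(Z_\pm;\mathbb Z), Q_{Z_\pm})$ with the standard lattice $(\mathbb Z^{n+1}, \pm\text{Id})$ in the two respective definiteness cases. Since $\tilde F$ is an orientation preserving self-diffeomorphism of a closed $4$-manifold, $\tilde F_\ast$ acts on this lattice by an isometry, which under the Donaldson identification becomes the isomorphism $\tilde F$ appearing in the statement. The horizontal embedding $\varphi$ is obtained by composing the Gordon--Litherland isomorphism $(\mathbb Z^n, G_\pm) \cong H_2(\hat X_\pm;\mathbb Z)/Tor$ from Theorem \ref{TheoremGordenLitherland} with the inclusion-induced injection $H_2(\hat X_\pm;\mathbb Z)/Tor \hookrightarrow H_2(Z_\pm;\mathbb Z)$ supplied by Theorem \ref{DonaldsonsEmbeddingTheoremForMobiusBands}, and then with the Donaldson isomorphism.

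Commutativity of the resulting square factors through two pieces. The first---relating the combinatorial action of $f$ on $\{X_1,\dots,X_n\}$ to $(\hat F_1)_\ast$ on $H_2(\hat X_\pm;\mathbb Z)/Tor$---is precisely diagram \eqref{EquationCommutativeDiagramforfandF1}. The second---relating $(\hat F_1)_\ast$ on $H_2(\hat X_\pm;\mathbb Z)/Tor$ to $\tilde F_\ast$ on $H_2(Z_\pm;\mathbb Z)$---is immediate from the construction of $\tilde F$ by gluing $\hat F_1$ and $\tilde F_2$, since the inclusion $\hat X_\pm \hookrightarrow Z_\pm$ intertwines the respective actions by construction.

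The main obstacle is the verification of diagram \eqref{EquationCommutativeDiagramforfandF1}, which requires checking that the geometric cone construction $S([\alpha]) = (\text{Cone of }\alpha\text{ in }D_1) - (\text{Cone of }\iota(\alpha)\text{ in }D_2)$ from the sketch of Theorem \ref{TheoremGordenLitherland} commutes with the $F_1$-action on both sides. This is the content of Proposition 13 in \cite{BoyleIssa}, which we cite rather than reprove. Once that compatibility is granted, the corollary reduces to the short diagram chase assembling the pieces above.
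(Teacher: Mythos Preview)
Your proposal is correct and follows the paper's own argument, which is the single sentence stating that the corollary is a direct consequence of Corollary~\ref{CorollaryAboutCorankOneEmbeddingOfLattices}, Theorem~\ref{TheoremAboutLiftingOfDiffeomorphism}, and diagram~\eqref{EquationCommutativeDiagramforfandF1}; you have simply unpacked these ingredients in detail. One minor caveat: you lift $F_1$ and $F_2$ to $\hat X_\pm$ and $\tilde X$ separately and then glue, which requires the two boundary lifts of $f$ to the double branched cover of $S^3$ along $K$ to agree (a priori they could differ by the deck transformation)---the paper sidesteps this by first gluing to obtain $F:S^4\to S^4$ and then applying Theorem~\ref{TheoremAboutLiftingOfDiffeomorphism} to lift directly to $Z$.
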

%%%
%%%
\begin{corollary} \label{CorollaryEquivariantDiagramForKleinBottle}
Let $f:S^3\to S^3$, $\{X_1, \dots, X_n\}$ and $G_\pm$ be as above, and assume that $\sigma (K)+4\cdot\text{Arf}(K)\equiv 4 \pmod 8$. If there do not exist commuative diagrams
$$
\xymatrix{
	(\mathbb Z^n, G_+)  \ar[d]_f  \ar[r]^\varphi & (\mathbb Z^{n+2}, \text{Id}) \ar[d]_{\cong}^{\tilde F} \\
	(\mathbb Z^n, G_+) \ar[r]^\varphi          &(\mathbb Z^{n+2}, \text{Id})
}	\qquad \text{ and } \qquad 
\xymatrix{
	(\mathbb Z^n, G_-)  \ar[d]_f  \ar[r]^\varphi & (\mathbb Z^{n+2}, -\text{Id}) \ar[d]_{\cong}^{\tilde F} \\
	(\mathbb Z^n, G_-) \ar[r]^\varphi          &(\mathbb Z^{n+2}, -\text{Id})
}
$$
then $\gamma_{4,p}(K)\ge 3$. In the diagrams, the horizontal maps $\varphi$ are embeddings of lattices, while $\tilde F$ is an isomorphism. 
\end{corollary}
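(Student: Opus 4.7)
The plan is to prove the contrapositive. Assume $\gamma_{4,p}(K)\le 2$, so $K$ bounds a smoothly and properly embedded equivariant non-orientable surface $B\subset D^4$ with $b_1(B)\le 2$ and an order $p$ diffeomorphism $F_2:D^4\to D^4$ realizing its symmetry. The hypothesis $\sigma(K)+4\cdot\text{Arf}(K)\equiv 4\pmod 8$ together with the Gilmer--Livingston criterion from Section \ref{SectionOnBranchedCovers} rules out $b_1(B)=1$, so $B$ is a punctured Klein Bottle with $b_1(B)=2$. The goal is then to exhibit one of the two commutative diagrams in the statement.

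First I would assemble the closed surface $(S^4,\Sigma)=(D^4,\hat\Sigma_{\pm})\cup_{(S^3,K)}(-D^4,B)$ and the order $p$ diffeomorphism $F:S^4\to S^4$ restricting to $F_1$ and $F_2$ on the two copies of $D^4$. Let $\pi:Z\to S^4$ be the twofold cyclic cover branched along $\Sigma$; equivalently $Z=\hat X_{\pm}\cup_{\partial\hat X}(-\tilde X)$, the $Z_{\pm}$ of \eqref{EquationConstructionOfTheClosedFourManifold}, where $\tilde X$ is the twofold cover of $D^4$ branched along $B$. Theorem \ref{JabukaKelly} guarantees that $\tilde X$ is definite, so after choosing the appropriate sign $Z=Z_{\pm}$ is a closed, smooth, definite 4-manifold. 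Donaldson's theorem identifies $(H_2(Z;\mathbb Z)/\text{Tor},Q_Z)$ with $(\mathbb Z^{n+2},\pm\text{Id})$, and Theorem \ref{DonaldsonsEmbeddingTheoremForKleinBottles} realizes the inclusion-induced map on second homology as a lattice embedding $\varphi:(\mathbb Z^n,G_{\pm})\hookrightarrow(\mathbb Z^{n+2},\pm\text{Id})$, where the domain is identified with $H_2(\hat X_{\pm};\mathbb Z)/\text{Tor}$ via Theorem \ref{TheoremGordenLitherland}. This supplies the horizontal arrows of the diagram.

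For the vertical arrows, I would invoke Theorem \ref{TheoremAboutLiftingOfDiffeomorphism} to lift $F$ to a diffeomorphism $\tilde F:Z\to Z$, whose induced map on $H_2(Z;\mathbb Z)/\text{Tor}$ is the desired lattice isomorphism $\tilde F$ of $(\mathbb Z^{n+2},\pm\text{Id})$. The left-hand vertical map $f$ on $\mathbb Z^n$ is already identified with $(\hat F_1)_\ast$ on $H_2(\hat X_{\pm};\mathbb Z)/\text{Tor}$ via the commutative square \eqref{EquationCommutativeDiagramforfandF1}. Commutativity of the target square then follows by stacking two squares: \eqref{EquationCommutativeDiagramforfandF1} on the left, and the naturality square for the inclusion $\hat X_{\pm}\hookrightarrow Z$ on the right, using that $\tilde F$ restricts to $\hat F_1$ on the embedded submanifold $\hat X_{\pm}\subset Z$.

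The main obstacle I expect is precisely this last restriction identity: verifying that the lifted diffeomorphism $\tilde F$ produced by Theorem \ref{TheoremAboutLiftingOfDiffeomorphism} truly restricts to $\hat F_1$ on the piece $\hat X_{\pm}\subset Z$, rather than to some other lift of $F_1$. This is essentially a basepoint bookkeeping matter. Both $\hat F_1$ and $\tilde F|_{\hat X_{\pm}}$ are lifts of $F_1:D^4\to D^4$ to the same twofold branched cover, and Bredon's uniqueness clause in Theorem \ref{BredonsResult} forces them to coincide as soon as one arranges a common basepoint inside $\text{Fix}(f)\cap\hat X_{\pm}$, which is possible since the fixed-point set of $f$ meets the branching surface. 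With that identification in place, the two commutative squares stack to yield the single commutative square asserted by the corollary, completing the contrapositive.
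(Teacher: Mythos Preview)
Your proof is correct and follows essentially the same route as the paper, which simply cites Corollary \ref{CorollaryAboutCorankTwoEmbeddingOfLattices}, Theorem \ref{TheoremAboutLiftingOfDiffeomorphism}, and diagram \eqref{EquationCommutativeDiagramforfandF1} without further elaboration; you have spelled out the argument in more detail and correctly isolated the one genuine subtlety (that $\tilde F|_{\hat X_\pm}=\hat F_1$), which the paper leaves implicit. One small correction to your final sentence: the basepoint for Bredon's uniqueness should be taken in $\text{Fix}(f)\setminus\Sigma$ (which is nonempty because $\text{Fix}(f)\subset S^3$ is disjoint from $K=\Sigma\cap S^3$ by the definition of a periodic knot), not at a point where $\text{Fix}(f)$ meets the branching surface.
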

%%%
%%%
Corollaries \ref{CorollaryEquivariantDiagramForMobiusBand} and \ref{CorollaryEquivariantDiagramForKleinBottle}  are a direct consequence of Corollaries \ref{CorollaryAboutCorankOneEmbeddingOfLattices} and \ref{CorollaryAboutCorankTwoEmbeddingOfLattices}, and of Theorem \ref{TheoremAboutLiftingOfDiffeomorphism} and the commutative diagram \eqref{EquationCommutativeDiagramforfandF1}. 
%%%%%%%%%%%%%%%%%%%%%%%%%%%%%%%%%%%%%%%%%%%%%%%%%%
%%%%%%%%%%%%%%%%%%%%%%%%%%%%%%%%%%%%%%%%%%%%%%%%%%
%%%%%%%%%%%%%%%%%%%%%%%%%%%%%%%%%%%%%%%%%%%%%%%%%%
\subsection{Summary} \label{SummarySection}
%%%
%%%
Let $K$ be a $p$-periodic knot whose periodicity is realized by a diffeomorphism $f:S^3\to S^3$ with Fix$(f) \ne \emptyset$. 

To obstruct $K$ from bounding an equivariant M\"obius band $M$ in $D^4$, and thus showing that $\gamma_{4,p}(K)\ge 2$, we first find all possible lattice embeddings 
$$\varphi:(\mathbb Z^n, G_\pm)\to (\mathbb Z^{n+1}, \pm \text{Id})$$ 
as in Corollary \ref{CorollaryAboutCorankOneEmbeddingOfLattices} (see also Example \ref{ExampleEmbeddingOfTheKnot12a1019}).  With a concrete map $f:\mathbb Z^n \to \mathbb Z^n$ read off from the periodic knot diagram, we ask whether an isomorphism $\tilde F:\mathbb Z^{n+1}\to \mathbb Z^{n+1}$ as in Corollary \ref{CorollaryEquivariantDiagramForMobiusBand} exists. If not, $K$ cannot bound an equivariant M\"obius band in $D^4$. 

\vskip1mm
Similalry, to obstruct a knot $K$ with $\sigma (K) + 4\cdot \text{Arf}(K)\equiv 4\pmod 8$ from bounding a puncture Klein Bottle $B$ in $D^4$, and thus showing that $\gamma_{4,p}(K)\ge 3$, one  first finds all possible lattice embeddings 
$$\varphi:(\mathbb Z^n, G_\pm)\to (\mathbb Z^{n+2}, \pm \text{Id}),$$
compare to Corollary \ref{CorollaryAboutCorankTwoEmbeddingOfLattices}. With a concrete map $f:\mathbb Z^n \to \mathbb Z^n$ read off from the periodic knot diagram, one asks whether an isomorphism $\tilde F:\mathbb Z^{n+2}\to \mathbb Z^{n+2}$ as in Corollary \ref{CorollaryEquivariantDiagramForKleinBottle} exists. If not, $K$ cannot bound an equivariant punctured Klein bottle in $D^4$. 
%%%
%%%
\begin{example}
Consider once more the $3$-periodic knot $K=12a_{1019}$ as in Examples \ref{ExampleOfTheGoeritzMatricesFor12a1019} and \ref{ExampleEmbeddingOfTheKnot12a1019}. The Goeritz matrices $G_\pm$ were obtained in the former, while the only possible lattice embeddings $\varphi_{1,2}:(\mathbb Z^6, G_\pm) \to (\mathbb Z^7, \pm \text{Id})$ were determined in the latter example. The map $f:\mathbb Z^6\to \mathbb Z^6$ is given in \eqref{EquationActionOn12a1019}. 

Isomorphisms $\tilde F_i:\mathbb Z^7\to\mathbb Z^7$, $i=1,2$ with $\tilde F_i \circ \varphi_i = \varphi _i \circ f$ (as in Corollary \ref{CorollaryEquivariantDiagramForMobiusBand}) indeed exist, and are given by 
$$\tilde F_1=\left[
\begin{array}{cccccc}
	0 & -1 & 0 & 0 & 0 & 0 \\
	0 & 0 & 0 & -1 & 0 & 0 \\
	0 & 0 & 0 & 0 & 0 & 1 \\
	1 & 0 & 0 & 0 & 0 & 0 \\
	0 & 0 & 1 & 0 & 0 & 0 \\
	0 & 0 & 0 & 0 & 1 & 0 \\
\end{array}
\right] \quad \text{ and } \quad 
\tilde F_2=\left[
\begin{array}{cccccc}
	0 & -1 & 0 & 0 & 0 & 0 \\
	0 & 0 & 0 & -1 & 0 & 0 \\
	0 & 0 & 0 & 0 & 0 & 1 \\
	1 & 0 & 0 & 0 & 0 & 0 \\
	0 & 0 & 1 & 0 & 0 & 0 \\
	0 & 0 & 0 & 0 & 1 & 0 \\
\end{array}
\right].
$$ 
Thus in this example we cannot conclude that $\gamma_{4,3}(12a_{1019})\ge 2$. Indeed we were able to find the required isomorphisms $\tilde F$ from Corollary \ref{CorollaryEquivariantDiagramForMobiusBand} for all periodic low-crossing knots that we examined, showing the limitations of these techniques. 
\end{example}

%%%%%%%%%%%%%%%%%%%%%%%%%%%%%%%%%%%%%%%%%%%%%%%%%
%%%%%%%%%%%%%%%%%%%%%%%%%%%%%%%%%%%%%%%%%%%%%%%%%
%%%%%%%%%%%%%%%%%%%%%%%%%%%%%%%%%%%%%%%%%%%%%%%%%
\section{Connected sums of the Figure Eight knot} \label{SectionOnConnectedSumsOfFigureEightKnots}
%%%%
For $n\in \mathbb N$ let $K_n=\#^n (4_1)$ denote the $n$-fold connected sum of the Figure Eight knot $4_1$. Since $4_1\#4_1$ is a slice knot, it follows that so is $K_n$ when $n$ is even, while if $n$ is odd then $K_n$ is concordant to $4_1$. Recall that $\gamma_4(4_1) = 2$ (the first proof of this uses intersection forms of 4-manifolds, Viro \cite{Viro}) and since $\gamma_4$ is a concordance invariant, we obtain 
\begin{equation} \label{Gamma4OfKn}
\gamma_4(K_n) = \left\{ 
\begin{array}{cl}
	2 & \quad ; \quad n \text{ is odd}, \cr
	1 & \quad ; \quad n \text{ is even}.
\end{array}
\right.
\end{equation}
Since $\sigma(K_n)=0$ for all $n\in \mathbb N$ and since Arf$(K_n)$ is 1 for $n$ odd and 0 for $n$ even, we also obtain 
\begin{equation} \label{SigmaPlusArfOfKn}
\sigma(K_n) +4\cdot \text{Arf}(K_n) \equiv  \left\{ 
\begin{array}{cl}
	4 \pmod 8 & \quad ; \quad n \text{ is odd}, \cr
	0 \pmod 8 & \quad ; \quad n \text{ is even}.
\end{array}
\right.
\end{equation}
Consider the checkerboard coloring of the $n$-periodic projection of $K_n$ as in Figure \ref{CheckerboardDiagramForFourfoldConnectedSumOfFigureEightKnot} (which depicts the case of $n=4$). The $n$-periodicity of $K_n$ is realized by the counterclockwise rotation $f$ (also shown in Figure \ref{CheckerboardDiagramForFourfoldConnectedSumOfFigureEightKnot}) about the central dot by an angle of $2\pi/n$.  
%%%
%%%
\begin{figure}
\includegraphics[width=11cm]{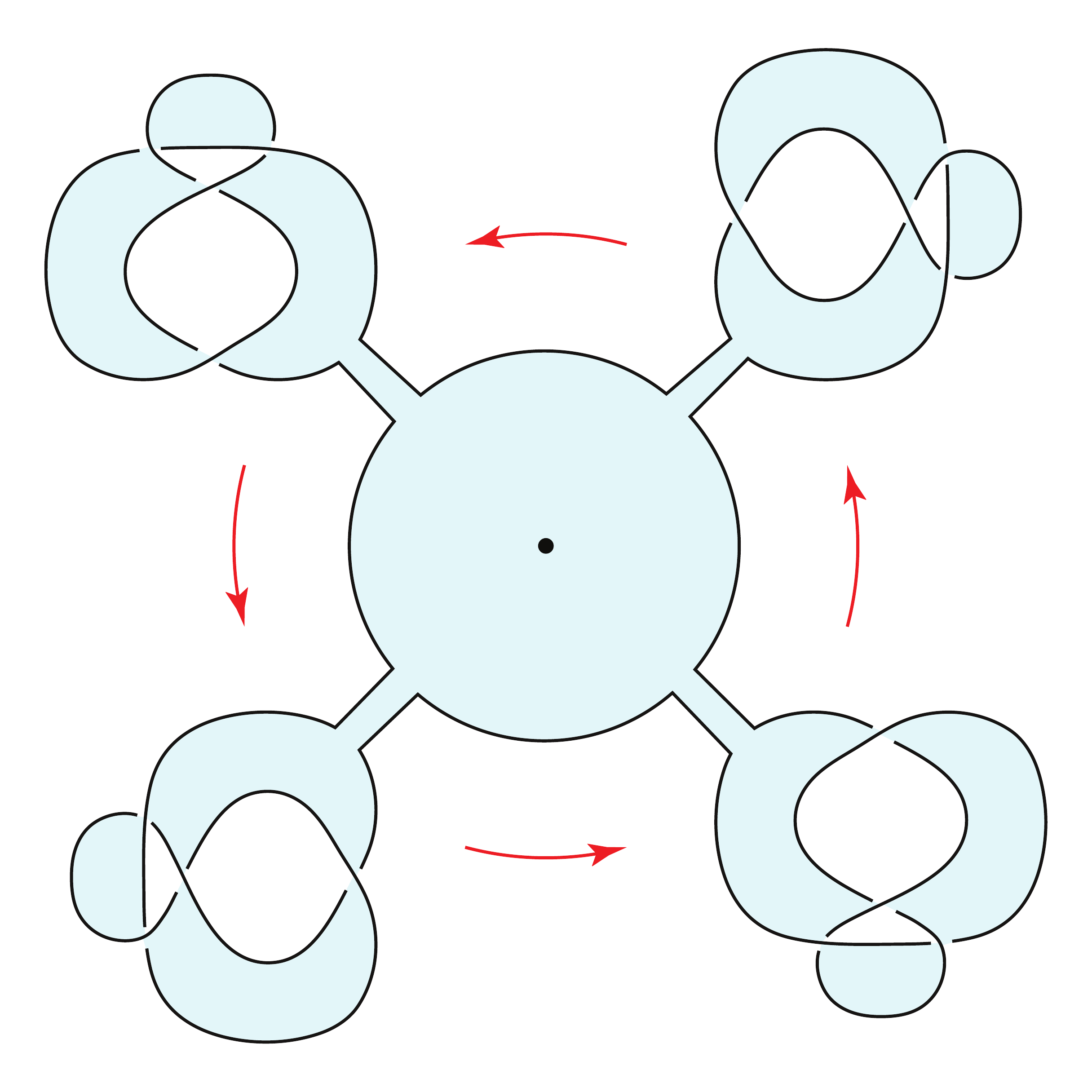}
\put(-300,158){\tiny $X_0$}
\put(-259,263){\tiny $X_1$}
\put(-259,234){\tiny $X_2$}
\put(-272,60){\tiny $X_3$}
\put(-242,60){\tiny $X_4$}
\put(-66,45){\tiny $X_5$}
\put(-66,75){\tiny $X_6$}
\put(-53,249){\tiny $X_7$}
\put(-83,249){\tiny $X_8$}
\put(-258,160){$f$}
\put(-60,154){$f$}
\put(-154,255){$f$}
\put(-162,52){$f$}
\caption{Checkerboard diagram for $K_n$, shown here for $n=4$. }
\label{CheckerboardDiagramForFourfoldConnectedSumOfFigureEightKnot} 
\end{figure}
%%%
%%%
Let $X_0, X_1, \dots, X_{2n}$ label the white regions in the checkerboard diagram, with $X_0$ labeling the unbounded region. Then the Goeritz matrix $G_n$ obtained from this data (after discarding the region labeled by $X_0$) is given by 
$$G_n=\left[
\begin{array}{rr}
-3 & 1 \cr 
1 & -2 	
\end{array}
\right] \oplus \left[
\begin{array}{rr}
	-3 & 1 \cr 
	1 & -2 	
\end{array}
\right] \oplus \dots \oplus 
\left[
\begin{array}{rr}
	-3 & 1 \cr 
	1 & -2 	
\end{array}
\right]. 
$$
The direct sum above refers to block-diagonal matrix summation, and there are $n$ summands in all, one for each copy of $4_1$ in $K_n$. Note that the rank of $G_n$ is $2n$ and that the $k$-th $2\times 2$ matrix summand in $G_n$ is generated by the regions $\{X_{2k-1}, X_{2k}\}$. We view $G_n$ as a symmetric, non-degenerate intersection form on $\mathbb Z^{2n}$, the free Abelian group generated by $\{X_1, \dots, X_{2n}\}$. We shall write $X_i \cdot X_j$ to mean $G_n(X_i, X_j)$.  
%%%
%%%
\begin{remark} \label{RemarkAboutPositiveDefiniteMatrixForKn}
The Goeritz matrix $G_n$ is the negative definite Goeritz matrix $G_-$ of the alternating knot $K_n$. The positive definite Goeritz matrix $G_+$, derived from the opposite checkerboard coloring of that from Figure \ref{CheckerboardDiagramForFourfoldConnectedSumOfFigureEightKnot},  is easily seen to equal $-G_-$, something we leave this as an exercise for the interested reader. 
\end{remark}
%%%
%%%
Given \eqref{Gamma4OfKn} and \eqref{SigmaPlusArfOfKn}, we are interested in enumerating all possible  lattice embeddings $(\mathbb Z^{2n}, G_n)\hookrightarrow (\mathbb Z^{2n+2}, -\text{Id})$ when $n$ is odd, and all possible lattice embeddings  $(\mathbb Z^{2n}, G_n)\hookrightarrow (\mathbb Z^{2n+1}, -\text{Id})$ when $n$ is even. These are considered separately in the next two subsections. 
%%%%%%%%%%%%%%%%%%%%%%%%%%%%%%%%%%%%%%%%%%%%%%%%%%%%%%%
%%%%%%%%%%%%%%%%%%%%%%%%%%%%%%%%%%%%%%%%%%%%%%%%%%%%%%%
\subsection{Embeddings in the case of $n$ even}
%%%
In this section we aim to find all lattice embeddings $\varphi :(\mathbb Z^{2n}, G_n) \to (\mathbb Z^{2n+1}, -\text{Id})$ up to isomorphism. We retain the notation from above by which $\mathbb Z^{2n}$ is the free Abelian group generated by $\{X_1,\dots, X_{2n}\}$, and we similarly let $\mathbb Z^{2n+1}$ be the free Abelian group generated by $\{Y_1, \dots, Y_{2n+1}\}$. We write $X_i \cdot X_j$ and $Y_k\cdot Y_\ell$ as shorthand for $G_n(X_i, X_j)$ and -Id$(Y_k, Y_\ell)$. With this notation understood, the task at hand is to find all monomorphisms $\varphi :\mathbb Z^{2n} \to \mathbb Z^{2n+1}$ with $\varphi(X_i)\cdot \varphi(X_j) = X_i \cdot X_j$, $i, j = 1, \dots, 2n$.   

When $i\in \{1,\dots, 2n\}$ is an even index then $X_i\cdot X_i=-2$, and so $\varphi (X_i) = \eps_{i_1} Y_{i_1}+ \eps_{i_2} Y_{i_2}$ for some pair of distinct indicies $i_1, i_2 \in \{1, \dots, 2n+1\}$ and for a pair of signs $\eps_{i_1}, \eps _{i_2}\in \{\pm 1\}$. Moreover, for $i\ne j$ if follows that $\{i_1, i_2\}\cap \{j_1, j_2\} = \emptyset$. Indeed, if the intersection $\{i_1, i_2\}\cap \{j_1, j_2\}$ were non-empty and contained just one index, e.g. if we had $i_1=j_1$ but $i_2\ne j_2$, then we would find $\varphi(X_i) \cdot \varphi(X_j)= \eps_{i_1}\cdot \eps_{j_1} \ne 0$, contradicting the fact that $X_{i}\cdot X_j =0$. If we had $\{i_1,i_2\} = \{j_1, j_2\}$ then we would obtain that 
\begin{align*}
1 = X_{i-1}\cdot X_i & = \varphi (X_{i-1})\cdot \varphi (X_i) \cr
& \equiv  \varphi (X_{i-1})\cdot \varphi (X_j) \pmod 2  \cr
& = X_{i-1}\cdot X_j = 0, 
\end{align*}
another contradiction, proving that $\{i_1, i_2\} \cap \{j_1,j_2\}=\emptyset$ whenever $i\ne j$. Therefore, without loss of generality, we obtain  
\begin{equation}  \label{varphiForEvenIndicies}
	\varphi(X_i) = Y_{i-1}+Y_i, \qquad \qquad i\in \{2,4,6,\dots, ,2n\}. 
\end{equation} 

Next we turn to the determination of $\varphi (X_i)$ for $i$ odd, starting with $i=1$. Let $\varphi(X_1) = \sum _{j=1}^{2n+1} a_i Y_i$ for some integers $a_1, \dots, a_{2n+1}$. Since $X_1\cdot X_1=-3$, exactly 3 of the integers $a_1, \dots, a_{2n+1}$ are non-zero and must equal 1 or -1. Since $X_1\cdot X_i=0$ for all $i\ge 3$, and $X_1\cdot X_2=1$, given \eqref{varphiForEvenIndicies} we find
$$a_{i-1}+a_i=\left\{
\begin{array}{rl}
	0 & \quad ; \quad i\in \{4,6,\dots, 2n\}, \cr
	-1 & \quad ; \quad i=2. \cr
\end{array}
\right. $$
From this it follows that either $a_1=-1$ and $a_2=0$, or that $a_1=0$ and $a_2=-1$. It also follows that there exist exactly one additional index $i\in \{4, 6, \dots, 2n\}$ with $a_{i-1}$ and $a_i$ non-zero. Without loss of generality, we can assume that this special index is $i=4$, leading to $a_3+a_4=0$. This last equation leads again to two possibilities, namley $a_3=1$ and $a_4=-1$, or $a_3=-1$ and $a_4=1$. Combining these two options with the two choices for $a_1$ and $a_2$, leads to these four possibilities:
\vskip1mm
\begin{itemize}
	\item[(a)] $a_1 = -1$, $a_2=0$, $a_3=1$, $a_4 = -1$ and so $\varphi (X_1) = -Y_1+Y_3-Y_4$.
	\item[(b)] $a_1 = -1$, $a_2=0$, $a_3=-1$, $a_4 = 1$ and so $\varphi (X_1) = -Y_1-Y_3+Y_4$. 
	\item[(c)] $a_1 = 0$, $a_2=-1$, $a_3=1$, $a_4 = -1$ and so $\varphi (X_1) = -Y_2+Y_3-Y_4$.
	\item[(d)] $a_1 = 0$, $a_2=-1$, $a_3=-1$, $a_4 = 1$ and so $\varphi (X_1) = -Y_2-Y_3+Y_4$.
\end{itemize}
\vskip1mm
It is easy to see that all four cases are related by an isomorphism of $\mathbb Z^{2n+1}$. Cases (a) and (b) and likewise cases (c) and (d) are obtained one from the other by post-composing $\varphi$ with the isomorphism that sends $Y_i$ to $-Y_i$ for $i=3,4$ and $Y_j$ to $Y_j$ for $j\ne 3,4$. Similarly, cases (a) and (c) and likewise cases (b) and (d) are interchanged by interchanging $Y_1$ and $Y_2$ (and leaving $Y_i$ unchanged for $i\ge 3$). Therefore, it suffices to only consider one of the cases, say case (a), leading to 
\begin{equation} \label{varphiForX1}
	\varphi (X_1) = -Y_1+Y_3-Y_4.
\end{equation}
Next we turn to $\varphi(X_3)$ and write $\varphi(X_3) = \sum _{i=1}^{2n+1}b_i Y_i$ for integers $b_1, \dots, b_{2n+1}$ to be determined. For the same reason as with $\varphi(X_1)$, only three of $b_1, \dots, b_{2n+1}$ may be non-zero and can only equal 1 or -1. Reading off the intersections $X_3\cdot X_i$ for $i\ne 3$ from the Goertiz matrix $G_n$, leads to 
$$b_1-b_3+b_4=0 \quad \text{ and } \quad   b_{i-1}+b_i=\left\{
\begin{array}{rl}
	0 & \quad ; \quad i\in \{2,6,8, \dots, 2n\}, \cr
	-1 & \quad ; \quad i=4. \cr
\end{array}
\right. $$
From these we obtain that $b_3+b_4=-1$ leading to two possibilities, namely $b_3=-1$ and $b_4=0$, or $b_3=0$ and $b_4=-1$. The former case leads to $b_1=-1$ and thus also to $b_2=1$, while the latter leads to $b_1=1$ and $b_2=-1$. In summary we obtain the two cases:
\begin{itemize}
	\item[(A)] $b_1=-1$, $b_2=1$, $b_3=-1$, $b_4=0$ and so $\varphi(X_3) = -Y_1+Y_2-Y_3$.
	\item[(B)] $b_1=1$, $b_2=-1$, $b_3=0$, $b_4=-1$ and so $\varphi(X_3) = Y_1-Y_2-Y_4$.
\end{itemize}
Combining these two possibilities for $\varphi(X_3)$ with the unique choices of $\varphi(X_i)$ for $i=1,2, 4$, we obtain these two cases:
%%%
\vskip2mm
\begin{center}
\begin{tabular}{ll}
{\bf Case 1} \phantom{mmmmmmmmmmmmmmmm}  & {\bf Case 2} \cr 
$\varphi(X_1) = -Y_1+Y_3-Y_4 $  & $\varphi(X_1) = -Y_1+Y_3-Y_4 $ \cr  	
$\varphi(X_2) = Y_1+Y_2$ & $\varphi(X_2) = Y_1+Y_2$\cr
$\varphi(X_3) = -Y_1+Y_2-Y_3$ & $\varphi(X_3) = Y_1-Y_2-Y_4$\cr
$\varphi(X_4) = Y_3+Y_4$ & $\varphi(X_4) = Y_3+Y_4$\cr
\end{tabular}
\end{center}
\vskip2mm
%%%
It is natural to ask whether there exists an isomorphism of $\mathbb Z^{2n+1}$ such that post-compsing one embedding by it gives the other embedding. This is not the case, something that is not hard to show and which we leave as an exercise.

Let $\psi_i:\mathbb Z^4\to \mathbb Z^4$ for $i=1, 2$ be the embeddings from Cases 1 and 2 above. We have shows that any embedding of lattices $\varphi :(\mathbb Z^{2n}, G_n) \to (\mathbb Z^{2n+1}, -\text{Id})$ has the property that $\varphi|_{Span\{X_1, X_2, X_3, X_4\}} = \psi_i$ for i=1 or 2. 

Viewing the image of $\psi_i$ as a subset of $\mathbb Z^{2n+1}$ by identifying the domain of $\psi_i$ with the first 4 summands of $\mathbb Z^{2n+1}$, we next calculate the orthogonal complement Im$(\psi_i)^\perp$ in $\mathbb Z^{2n+1}$. Consider an element $Y=\sum _{i=1}^{2n+1} c_i Y_i \in \mathbb Z^{2n+1}$ with $c_1, \dots, c_{2n+1}\in \mathbb Z$ and such that $Y\perp \text{Im}(\psi_i)$. Then we find that: 
$$c_1-c_3+c_4=0, \quad c_1+c_2=0, \quad c_3+c_4=0, \quad \text{ and } \quad \begin{array}{cl} c_1-c_2+c_3=0 & ; \quad \text{ if } i=1, \text{ or }  \cr 
	& \cr
-c_1+c_2+c_4=0 & ; \quad \text{ if } i=2.
\end{array}$$
Solving the two systems shows that $c_i=0$ for $i=1,\dots, 4$ for both $\psi_1$ and $\psi_2$, leading to the conclusion that $(\text{Im}(\psi_i))^\perp = Span\{X_5, X_6, \dots, X_{2n+1}\}$. In particular, as we next seek to determine $\varphi(X_5), \varphi (X_7), \dots, \varphi (X_{2n-1})$, the problem we are presented with becomes identical as the one we started with, but with $G_n$ replaced by $G_{n-2}$. Thus, for instance, the computations of $\varphi (X_5)$ and $\varphi (X_7)$ follow in close analogy those for $\varphi (X_1)$ and $\varphi (X_3)$, etc. We summarize the final outcome of this process in the next theorem. 
%%%
%%%
\begin{theorem} \label{EmbeddingInCaseOfNEven}
Let $\psi_i$ for $i=1,2$ be the embeddings of lattices 
$$\psi_i: (\mathbb Z^4=Span\{X_1, X_2, X_3 X_4\}, G_2)\to (\mathbb Z^4=Span\{Y_1, Y_2, Y_3, Y_4\}, -\text{Id})$$ 
given by %
$$
\begin{array}{rlcrl}
\psi_1(X_1) & = -Y_1+Y_3-Y_4 &\qquad \qquad   & \psi_2 (X_1) & = -Y_1+Y_3-Y_4  \cr  	
\psi_1(X_2) & = Y_1+Y_2 &  & \psi_2(X_2) & = Y_1+Y_2\cr
\psi_1(X_3) & = -Y_1+Y_2-Y_3 &  & \psi_2(X_3) &= Y_1-Y_2-Y_4\cr
\psi_1(X_4) & = Y_3+Y_4 &  & \psi_2(X_4) &= Y_3+Y_4
\end{array}
$$
Let $n\ge 2$ be even, then the only embeddings of lattices $(\mathbb Z^{2n}, G_n) \to (\mathbb Z^{2n+1}, -\text{Id})$ (up to post-composition by an isomorphism of $\mathbb Z^{2n+1}$) are given by 
$$\varphi = \psi_{i_1} \oplus \dots \oplus \psi_{i_{n/2}},$$
for some choice of indicies $i_1, \dots, i_{n/2} \in \{1,2\}$. 
\end{theorem}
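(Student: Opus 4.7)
Plan: I would prove the theorem by induction on the even integer $n$, with base case $n = 2$. The base case is essentially the content of the preceding case analysis: after using isomorphisms of the target to normalize $\varphi(X_2) = Y_1 + Y_2$ and $\varphi(X_4) = Y_3 + Y_4$, the four a priori possibilities (a)--(d) for $\varphi(X_1)$ collapse under post-composition with sign-flip and basis-swap isomorphisms of $\mathbb{Z}^{2n+1}$ to the single case $\varphi(X_1) = -Y_1 + Y_3 - Y_4$, and then the intersection-form constraints for $\varphi(X_3)$ leave exactly the two possibilities (A) and (B), giving $\psi_1$ and $\psi_2$. The one subtlety worth fleshing out in the base case is verifying that no isomorphism of the target takes $\psi_1$ to $\psi_2$; this is a finite computation, e.g.\ by exhibiting a basis-independent invariant such as the pairing profile of $\text{Im}(\psi_i)^\perp$ with the image generators.

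For the inductive step, suppose the result holds for all even integers strictly less than $n$, and let $\varphi : (\mathbb{Z}^{2n}, G_n) \to (\mathbb{Z}^{2n+1}, -\text{Id})$ be an arbitrary embedding. Applying the base case to the generators $X_1, X_2, X_3, X_4$ (which span a sublattice isometric to $(\mathbb{Z}^4, G_2)$ by the block-diagonal structure of $G_n$), I may post-compose with an isomorphism of $\mathbb{Z}^{2n+1}$ so that $\varphi|_{\text{Span}\{X_1, X_2, X_3, X_4\}} = \psi_{i_1}$ for some $i_1 \in \{1, 2\}$, with image contained in $\text{Span}\{Y_1, Y_2, Y_3, Y_4\}$. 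The orthogonal-complement calculation already recorded in the text shows that $\text{Im}(\psi_{i_1})^\perp = \text{Span}\{Y_5, Y_6, \dots, Y_{2n+1}\}$ inside $(\mathbb{Z}^{2n+1}, -\text{Id})$, which carries the restricted form $-\text{Id}$ of rank $2(n-2)+1$.

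Since $G_n$ is block-diagonal, $\text{Span}\{X_5, \dots, X_{2n}\}$ is $G_n$-orthogonal to $\text{Span}\{X_1, \dots, X_4\}$, so each of $\varphi(X_5), \dots, \varphi(X_{2n})$ lies in $\text{Im}(\psi_{i_1})^\perp$. The restriction $\varphi|_{\text{Span}\{X_5, \dots, X_{2n}\}}$ is therefore a lattice embedding $(\mathbb{Z}^{2(n-2)}, G_{n-2}) \hookrightarrow (\mathbb{Z}^{2(n-2)+1}, -\text{Id})$, and the inductive hypothesis identifies it (after a further post-composition by an isomorphism of $\text{Span}\{Y_5, \dots, Y_{2n+1}\}$, extended to all of $\mathbb{Z}^{2n+1}$ by the identity on $\text{Span}\{Y_1, \dots, Y_4\}$) with $\psi_{i_2} \oplus \dots \oplus \psi_{i_{n/2}}$. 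Combining the two pieces gives $\varphi = \psi_{i_1} \oplus \psi_{i_2} \oplus \dots \oplus \psi_{i_{n/2}}$, as required.

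The main obstacle is the bookkeeping in the base case: tracking which solutions of the intersection-form system are equivalent under automorphisms of the target, and in particular ruling out any isomorphism carrying $\psi_1$ to $\psi_2$, so that both cases must genuinely appear in the enumeration. Once the base case is pinned down, the inductive step is essentially formal, resting on the key observation that the orthogonal complement of $\text{Im}(\psi_{i_1})$ in $(\mathbb{Z}^{2n+1}, -\text{Id})$ is itself a standard negative-definite lattice of exactly the right rank, which reduces the enumeration problem verbatim to the same problem in dimension $2(n-2)$.
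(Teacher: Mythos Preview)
Your inductive framework mirrors the paper's recursive reduction, but there is a genuine gap in the inductive step. Your base case classifies embeddings of $(\mathbb Z^4, G_2)$ into $(\mathbb Z^5, -\text{Id})$. In the inductive step, however, you restrict a given $\varphi:(\mathbb Z^{2n},G_n)\to(\mathbb Z^{2n+1},-\text{Id})$ to $\text{Span}\{X_1,X_2,X_3,X_4\}$ and obtain an embedding of $G_2$ into $(\mathbb Z^{2n+1},-\text{Id})$, not into $(\mathbb Z^5,-\text{Id})$. The base case does not apply, and the implied stronger statement (that every embedding $G_2\hookrightarrow(\mathbb Z^N,-\text{Id})$ is equivalent to $\psi_1$ or $\psi_2$ landing in a standard rank-$4$ summand) is false. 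For instance, in $(\mathbb Z^6,-\text{Id})$ the map
\[
X_1\mapsto -Y_1+Y_5+Y_6,\quad X_2\mapsto Y_1+Y_2,\quad X_3\mapsto -Y_3+Y_5-Y_6,\quad X_4\mapsto Y_3+Y_4
\]
is a lattice embedding of $G_2$ whose orthogonal complement is isometric to $\langle -5\rangle\oplus\langle -5\rangle$, hence not equivalent to $\psi_1$ or $\psi_2$ (whose complement in $\mathbb Z^6$ is $(\mathbb Z^2,-\text{Id})$). What rules such embeddings out in the full problem is precisely the orthogonality of $\varphi(X_1)$ to $\varphi(X_6),\varphi(X_8),\dots,\varphi(X_{2n})$, constraints your inductive step postpones until after invoking the base case.

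The paper avoids this by first normalizing $\varphi(X_i)=Y_{i-1}+Y_i$ for \emph{all} even $i$ simultaneously (using that any two such images have disjoint support), and only then analyzing $\varphi(X_1)$. The constraints $a_{2k-1}+a_{2k}=0$ for every $k\ge 2$, together with $a_1+a_2=-1$ and $\sum a_i^2=3$, are what force the three nonzero coefficients of $\varphi(X_1)$ to consist of a single $a_1$ or $a_2$ plus a single pair $\{a_{2k-1},a_{2k}\}$; the ``WLOG $k=2$'' then sets up the orthogonal-complement recursion. To repair your argument, either adopt this global normalization of the even-index images before touching any odd-index generator, or strengthen your base case to classify all embeddings $G_2\hookrightarrow(\mathbb Z^N,-\text{Id})$ and show that only those equivalent to $\psi_1,\psi_2$ have orthogonal complement admitting enough norm-$(-2)$ vectors for the remaining generators.
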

%%%
%%%
\begin{remark} \label{RemarkAboutFactoringOfEmbedding}
Observe that the two embeddings $\psi_1, \psi_2: (\mathbb Z^4, G_2) \to (\mathbb Z^4, -\text{Id})$ from the preceding theorem agree on the sublattice $Span\{X_1, X_2, X_4\}$. Observe also that each embedding $\varphi:\mathbb Z^{2n} \to \mathbb Z^{2n+1} \cong \mathbb Z^{2n}\oplus \mathbb Z$ is an embedding into the first $2n$ $\mathbb Z$-summands of $\mathbb Z^{2n+1}$. 
\end{remark}
%%%%%%%%%%%%%%%%%%5
%%%%%%%%%%%%%%%%%%
\subsection{Embeddings in the case of $n$ odd}
%%%
In this section we seek to fully determine embeddings of lattices $\varphi :(\mathbb Z^{2n}, G_n) \to (\mathbb Z^{2n+2}, -\text{Id})$. The argument from the previous section can be used to obtain  \eqref{varphiForEvenIndicies}:
$$
\begin{array}{rlcl}
	\varphi(X_i) & = Y_{i-1}+Y_i, &\qquad &i = 2,4,6,\dots, ,2n,\cr
	\varphi\big|_{Span\{X_{4k+1}, X_{4k+2}, X_{4k+3}, X_{4k+4}\}} & = \psi _1 \text{ or } \psi _2, & &k=0, \dots,(n-3)/2.  
\end{array}
$$	
where $\psi_1$ and $\psi_2$ are as in Theorem \ref{EmbeddingInCaseOfNEven}. It only remains to determine $\varphi(X_{2n-1})$. Since $\varphi( X_{2n-1})\cdot \varphi(X_i)=0$ for all $i=1, \dots, 2n-2$ then 
$$\varphi (X_{2n-1}) \in \left( \text{Im}(\varphi\big|_{Span\{X_{1}, \dots,  X_{2n-2}\}} ) \right) ^\perp = Span\{Y_{2n-1}, \dotsm Y_{2n+2}\}.$$ 
Thus write $\varphi(X_{2n-1}) = aY_{2n-1}+bY_{2n}+cY_{2n+1}+dY_{2n+2}$ for $a, b, c, d\in \mathbb Z$. Only 3 of $a, b, c, d$ are non-zero, and the non-zero coefficients must equal 1 or -1. From $X_{2n-1}\cdot X_{2n} = 1$ and $\varphi (X_{2n}) = Y_{2n-1}+Y_{2n}$ we infer that $a+b=-1$. Thus either $a=-1$ and $b=0$, or $a=0$ and $b=-1$. In either case, $c$ and $d$ can be chosen at will from $\{\pm 1\}$. The two cases  
%
 %%%
 \vskip2mm
 \begin{center}
 	\begin{tabular}{ll}
 		{\bf Case 1} \phantom{mmmmmmmmmmmmmmmmmm}  & {\bf Case 2} \cr 
 		$\varphi(X_{2n-1}) = -Y_{2n-1}+cY_{2n+1}+dY_{2n+2}$  & $\varphi(X_{2n-1}) = -Y_{2n}+c'Y_{2n+1}+d'Y_{2n+2}$ \cr  	
 		$\varphi(X_{2n}) = Y_{2n-1}+Y_{2n}$ & $\varphi(X_{2n}) = Y_{2n-1}+Y_{2n}$
 	\end{tabular}
 \end{center}
 \vskip2mm
 %%%
with $c, c', d, d'\in \{\pm 1\}$, are easily seen to be the same up to post-composing $\varphi$ with an isomorphism of $\mathbb Z^{2n+2}$. In summary we have proved this theorem:
%%%
%%%
\begin{theorem} \label{EmbeddingInCaseOfNOdd}
Let $\psi_i$ for $i=1,2$ be the embeddings of lattices as in Theorem \ref{EmbeddingInCaseOfNEven} and let 
$$\psi_3: (\mathbb Z^2=Span\{X_{2n-1}, X_{2n}\}, G_1)\to (\mathbb Z^4=Span\{Y_{2n-1},\dots, Y_{2n+2}\}, -\text{Id})$$ 
be the embedding of lattices given by %
$$\psi_3(X_{2n-1})  = -Y_{2n-1}+Y_{2n+1}+Y_{2n+2} \qquad\text{ and }  \qquad   \psi_3 (X_{2n}) = Y_{2n-1}+Y_{2n}. $$
Then the only embeddings of lattices $\varphi : (\mathbb Z^{2n}, G_n) \to (\mathbb Z^{2n+2}, -\text{Id})$ (up to post-composition by an isomorphism of $\mathbb Z^{2n+2}$)  are given by 
$$\varphi = \psi_{i_1} \oplus \dots \oplus \psi_{i_{(n-1)/2}} \oplus \psi_3,$$
for some choice of indicies $i_1, \dots, i_{(n-1)/2} \in \{1,2\}$.  
\end{theorem}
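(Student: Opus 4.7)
My plan is to follow the same recursive strategy used in the even case, reducing the problem step by step to the orthogonal complement of already-determined images, and then handle the single leftover odd index $X_{2n-1}$ separately.

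First, I would reuse the argument from the even case verbatim for the even-indexed generators. For each even $i \in \{2,4,\dots,2n\}$ we have $X_i\cdot X_i=-2$, and the orthogonality relations $X_i\cdot X_j=0$ for $i\neq j$ even (together with $X_{i-1}\cdot X_i=1$) force, after choosing a suitable basis, $\varphi(X_i)=Y_{i-1}+Y_i$. This step is identical to \eqref{varphiForEvenIndicies} and does not depend on the parity of $n$. Next, I would address the odd indices in consecutive blocks: for $k=0,1,\dots,(n-3)/2$, the restriction $\varphi|_{\mathrm{Span}\{X_{4k+1},X_{4k+2},X_{4k+3},X_{4k+4}\}}$ must equal $\psi_1$ or $\psi_2$. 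This is justified by showing inductively that the orthogonal complement of the image of $\mathrm{Span}\{X_1,\dots,X_{4k}\}$ is precisely $\mathrm{Span}\{Y_{4k+1},\dots,Y_{2n+2}\}$ (an easy linear-algebra computation, as already done in the even case to derive $(\mathrm{Im}\,\psi_i)^\perp=\mathrm{Span}\{X_5,\dots,X_{2n+1}\}$). Once inside this orthogonal complement, the determination of $\varphi(X_{4k+1})$ and $\varphi(X_{4k+3})$ is exactly the problem solved in Theorem \ref{EmbeddingInCaseOfNEven}, yielding the two options $\psi_1,\psi_2$.

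The only genuinely new step is handling the final pair $X_{2n-1},X_{2n}$. After the recursion, $\varphi(X_{2n-1})$ must lie in the orthogonal complement of $\varphi(\mathrm{Span}\{X_1,\dots,X_{2n-2}\})$, which by the same complement calculation equals $\mathrm{Span}\{Y_{2n-1},Y_{2n},Y_{2n+1},Y_{2n+2}\}$. Writing $\varphi(X_{2n-1})=aY_{2n-1}+bY_{2n}+cY_{2n+1}+dY_{2n+2}$, the condition $X_{2n-1}\cdot X_{2n-1}=-3$ forces exactly three of $a,b,c,d$ to be nonzero and each such coordinate to lie in $\{\pm1\}$, while $X_{2n-1}\cdot X_{2n}=1$ together with $\varphi(X_{2n})=Y_{2n-1}+Y_{2n}$ gives $a+b=-1$. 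This leaves the two sub-cases $(a,b)=(-1,0)$ or $(0,-1)$, and for each the remaining two of $\{c,d\}$ can independently be chosen $\pm 1$.

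Finally, I would verify that all such choices are equivalent up to an isomorphism of $(\mathbb Z^{2n+2},-\mathrm{Id})$, which reduces to observing that the maps $Y_{2n-1}\leftrightarrow Y_{2n}$ (combined with suitable adjustments to preserve $\varphi(X_{2n})=Y_{2n-1}+Y_{2n}$, e.g.\ permuting/swapping later basis elements) and $Y_{2n+1}\mapsto -Y_{2n+1}$, $Y_{2n+2}\mapsto -Y_{2n+2}$ interchange the variants. A single canonical representative is $\psi_3(X_{2n-1})=-Y_{2n-1}+Y_{2n+1}+Y_{2n+2}$, $\psi_3(X_{2n})=Y_{2n-1}+Y_{2n}$, giving exactly the statement of the theorem. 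The main obstacle is purely bookkeeping: making sure the chain of isomorphisms used to normalize each block does not spoil the normalizations of earlier blocks, which is handled by the orthogonality that decouples the blocks.
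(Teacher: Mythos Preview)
Your proposal is correct and follows essentially the same approach as the paper: recycle the even-case recursion to normalize the first $(n-1)/2$ four-blocks into copies of $\psi_1$ or $\psi_2$, then use orthogonality to place $\varphi(X_{2n-1})$ in $\mathrm{Span}\{Y_{2n-1},\dots,Y_{2n+2}\}$ and solve $a+b=-1$ with three $\pm 1$ entries, reducing all variants to $\psi_3$ by sign changes and the swap $Y_{2n-1}\leftrightarrow Y_{2n}$. The only cosmetic difference is that the paper handles the final block more tersely (and your parenthetical about ``suitable adjustments'' for the swap is unnecessary, since $Y_{2n-1}\leftrightarrow Y_{2n}$ already fixes $\varphi(X_{2n})$).
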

%%%%%%%%%%%%%%%%%%%%%%%%%%%%%%%%%%%%%%%%%%%%%%%%
%%%%%%%%%%%%%%%%%%%%%%%%%%%%%%%%%%%%%%%%%%%%%%%%
\subsection{Equivariant Embeddings}
%%%%
%%%%
With the lattice embeddings 
$$\begin{array}{cl}
\varphi:(\mathbb Z^{2n}, G_n) \to (\mathbb Z^{2n+1}, -\text{Id}) &\qquad  (\text{if } n \text{ is even}), \cr
\varphi:(\mathbb Z^{2n}, G_n) \to (\mathbb Z^{2n+2}, -\text{Id}) & \qquad  (\text{if } n \text{ is odd}),
\end{array}
$$
fully enumerated in Theorems \ref{EmbeddingInCaseOfNEven} and \ref{EmbeddingInCaseOfNOdd} respectively, we now turn to the question of whether any of these embedding may be equivariant. Specifically, let $f:\mathbb Z^{2n} \to \mathbb Z^{2n}$ be the order $n$ isomorphism induced by the orientation preserving homeomorphism of  the same name $f:S^3\to S^3$ that facilitates the $n$-periodicity of $K_n$, see Figure \ref{CheckerboardDiagramForFourfoldConnectedSumOfFigureEightKnot}. It is easy to read off from said figure that 
\begin{equation} \label{DefinitionOff}
f(X_i) = \left\{
\begin{array}{cl}
X_{i+2} & \quad ; \quad i\le 2n-2, \cr
X_1 & \quad ; \quad i= 2n-1, \cr
X_2 & \quad ; \quad i= 2n.
\end{array}
\right.
\end{equation}
We ask then whether there exists an isomorphism $\tilde F:\mathbb Z^{2n+1} \to \mathbb Z^{2n+1}$ if $n$ is even, or an isomorphism  $\tilde F:\mathbb Z^{2n+2} \to \mathbb Z^{2n+2}$ if $n$ is odd, such that $\varphi\circ f = \tilde F\circ \varphi$? If such an $\tilde F$ exists, we shall call $\varphi$ an {\em equivariant embedding of lattices}. 
%%%
%%%
\begin{theorem} \label{TheoremOnNonequivarianceOfEmbeddings}
None of the lattice embeddings from Theorems \ref{EmbeddingInCaseOfNEven} and \ref{EmbeddingInCaseOfNOdd} are equivariant. 
\end{theorem}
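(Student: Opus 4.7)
The plan is to assume an intertwining isomorphism $\tilde F$ exists and derive a contradiction. The approach rests on three observations. First, since the target $(\mathbb Z^{2n+c}, -\text{Id})$ (with $c=1$ for $n$ even and $c=2$ for $n$ odd) has only $\pm Y_i$ as vectors of self-pairing $-1$, any isometry $\tilde F$ is a signed permutation: $\tilde F(Y_i) = \epsilon_i Y_{\sigma(i)}$ for some $\sigma \in S_{2n+c}$ and signs $\epsilon_i \in \{\pm 1\}$.

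Second, I would exploit the ``even-indexed'' part of every embedding in the classification of Theorems \ref{EmbeddingInCaseOfNEven} and \ref{EmbeddingInCaseOfNOdd}, which is uniform across all cases: $\varphi(X_{2k}) = Y_{2k-1}+Y_{2k}$ for every $k=1,\dots,n$. Since $f(X_{2k}) = X_{2k+2}$ cyclically, the equivariance relation $\tilde F \varphi(X_{2k}) = \varphi(X_{2k+2})$ becomes $\tilde F(Y_{2k-1}+Y_{2k}) = Y_{2k+1}+Y_{2k+2}$ (indices mod $2n$). Because $\tilde F(Y_{2k-1})$ and $\tilde F(Y_{2k})$ are distinct $\pm$ basis vectors whose sum has two positive unit entries on distinct basis vectors, $\tilde F$ must send the pair $P_k := \{Y_{2k-1}, Y_{2k}\}$ bijectively with positive signs onto $P_{k+1}$. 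In particular $\tilde F$ cyclically rotates $P_1, \ldots, P_n$, and for $n$ odd it must additionally permute the extras $\{Y_{2n+1}, Y_{2n+2}\}$ among themselves (with signs).

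Third, I would apply the remaining constraint $\tilde F \varphi(X_1) = \varphi(X_3)$. In every embedding $\varphi(X_1) = -Y_1+Y_3-Y_4$ has pair-support $\{P_1, P_2\}$, so $\tilde F \varphi(X_1)$ has pair-support $\{P_2, P_3\}$; on the other hand, whether block one is $\psi_1$ or $\psi_2$, $\varphi(X_3)$ is supported in $P_1 \cup P_2$. Equivariance then demands $\{P_2, P_3\} = \{P_1, P_2\}$ as pair-indices modulo $n$, forcing $n \mid 2$, which rules out every $n \ge 3$. For $n$ odd an alternative obstruction is perhaps cleaner: $\varphi(X_{2n-1}) = -Y_{2n-1}+Y_{2n+1}+Y_{2n+2}$ has nonzero coefficient on the extras $\{Y_{2n+1}, Y_{2n+2}\}$, and since $\tilde F$ permutes those extras (necessarily with distinct images), $\tilde F\varphi(X_{2n-1})$ still has nonzero entries on both extras, whereas $\varphi(X_1)$ has none, contradicting $\tilde F \varphi(X_{2n-1}) = \varphi(X_1)$.

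The residual case $n=2$ degenerates because $P_3 = P_1$ modulo $2$ and must be dispatched by direct inspection. I would use the $f$-eigenspace decomposition on $\mathbb Z^4 \otimes \mathbb Q$: since $f$ has order two, it splits into $\pm 1$ eigenspaces, and for $\psi_1$ the $+1$-eigenvector $\varphi(X_1+X_3) = -2Y_1+Y_2-Y_4$ has its unique coefficient of absolute value $2$ on $Y_1$, pinning $\tilde F(Y_1) = +Y_1$; the other fixed vector $\varphi(X_2+X_4) = Y_1+Y_2+Y_3+Y_4$ then forces $\tilde F$ to act as the identity on $\{Y_1,\dots,Y_4\}$, whereupon the $-1$-eigenvector $\varphi(X_1-X_3) = -Y_2+2Y_3-Y_4$ cannot be negated. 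The case $\psi_2$ is analogous. The main obstacle I foresee is simply bookkeeping: making sure the pair-support argument is stated in a way that applies uniformly to the $\psi_1/\psi_2$ choices across all blocks and to the $\psi_3$ tail for $n$ odd, without enumerating the $2^{n/2}$ (resp.\ $2^{(n-1)/2}$) possible embeddings separately.
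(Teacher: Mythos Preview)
Your proof is correct and takes a genuinely different route from the paper's. The paper treats $\varphi$, $f$, $\tilde F$ as matrices, constructs a one-sided rational inverse $\bar\varphi$ of $\varphi$, and from $\varphi f = \tilde F \varphi$ extracts the single entry $\tilde F_{1,1}$, which turns out to equal $\pm 1/5$ (or $\pm 2/5$ when $n=2$); since this is not an integer, no $\mathbb Z$-linear $\tilde F$ can exist. Your argument instead exploits the fact that any isometry of $(\mathbb Z^{m},-\text{Id})$ is a signed permutation, then uses the uniform formula $\varphi(X_{2k})=Y_{2k-1}+Y_{2k}$ to force $\tilde F$ to cycle the pairs $P_k=\{Y_{2k-1},Y_{2k}\}$, after which a support comparison between $\tilde F\varphi(X_1)$ (supported in $P_2\cup P_3$) and $\varphi(X_3)$ (supported in $P_1\cup P_2$) gives the contradiction for $n\ge 3$; the case $n=2$ you handle by a short eigenspace argument.

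Each approach has its merits. The paper's matrix computation is slightly stronger in that it never uses that $\tilde F$ preserves the form, so it rules out \emph{any} $\mathbb Z$-module homomorphism intertwining $f$ and $\varphi$; on the other hand it is opaque and relies on explicit calculation of entries of $\tilde\varphi^{-1}$. Your argument is more conceptual and makes transparent \emph{why} equivariance fails: the block structure of the classified embeddings forces $\varphi(X_1)$ and $\varphi(X_3)=\varphi(f(X_1))$ to live in the same $4$-dimensional block, while the cyclic action of $\tilde F$ on the pairs would push $\tilde F\varphi(X_1)$ into the next block. The bookkeeping concern you raised is not really an issue: since $\psi_1$ and $\psi_2$ agree on $X_1,X_2,X_4$ and both send $X_3$ into $\text{span}\{Y_1,\dots,Y_4\}$, your support argument is uniform across all $2^{\lfloor n/2\rfloor}$ embeddings without case-splitting. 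One small point worth making explicit in a write-up: the assumption that $\tilde F$ is an isometry (not merely a group isomorphism) is justified by the origin of $\tilde F$ in Corollaries~\ref{CorollaryEquivariantDiagramForMobiusBand} and~\ref{CorollaryEquivariantDiagramForKleinBottle}, where it is induced by a diffeomorphism and hence preserves the intersection form.
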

%%%
%%%
\begin{proof} Consider first the case of $n$ even. Since the maps $\varphi, f, \tilde F$ are maps between free Abelian groups, we shall think of each as being represented by a matrix with entries $\varphi=[\varphi_{i,j}]$, $f=[f_{k,\ell}]$ and $\tilde F=[\tilde F_{r,s}]$ with $i, r, s =1, \dots, 2n+1$ and with $j,k,\ell =1, \dots, 2n$. 
	
Let $\varphi = \psi_{i_1}\oplus \dots \oplus \psi_{i_{n/2}}$ as in Theorem \ref{EmbeddingInCaseOfNEven} and note that (cf. Remark \ref{RemarkAboutFactoringOfEmbedding}) we can view $\varphi$ as a map into $\mathbb Z^{2n}$, the first $2n$ $\mathbb Z$-summands of the $\mathbb Z^{2n+1}$. Let us call this map $\tilde \varphi:\mathbb Z^{2n} \to \mathbb Z^{2n} = \mathbb Z^{2n}\oplus 0 \subset \mathbb Z^{2n}\oplus \mathbb Z \cong \mathbb Z^{2n+1}$. It is easy to see that $\det \tilde \varphi = \pm 5^{n/2}$, this follows from the readily verified facts that $\det \psi_1 = 5$ and $\det \psi_2=-5$. Thus $\tilde \varphi^{-1}$ is a well defined $(2n)\times (2n)$ matrix with rational entries. Let $\bar \varphi$ be the $(2n)\times (2n+1)$ matrix (also with rational entries) obtained from $\tilde \varphi ^{-1}$ by letting its $(2n+1)$-st column be zero. Note that then 
$$
 \varphi \cdot \bar \varphi = \left[
\begin{array}{ccc|c}
	& & & 0 \cr
	& \text{Id}_{(2n)\times (2n)} & & \vdots \cr
	& & & 0 \cr \hline 
	0 & \dots & 0 & 0 
\end{array}
\right].
$$ 

Suppose then that the matrix equation $\varphi \cdot f = \tilde F\cdot \varphi$ held true for some isomorphsim $\tilde F:\mathbb Z^{2n+1} \to \mathbb Z^{2n+1}$.  Multiply this equation from the right by $\bar \varphi$ to get $\varphi \cdot f \cdot \bar \varphi = \tilde F\cdot \varphi \cdot \bar \varphi$, which in turn implies that $\tilde \varphi \cdot f \cdot \tilde \varphi ^{-1} = [\tilde F_{r,s}]_{r,s=1,\dots, 2n}$. In particular, it follows that 
$$\tilde F_{1,1} = \sum _{k,\ell =1}^{2n} \tilde \varphi_{1,k} \cdot f_{k,\ell} \cdot \tilde \varphi ^{-1}_{\ell,1}. $$
From \eqref{DefinitionOff} it follows that 
\begin{equation} \label{FormulaForLittleF}
f_{k,\ell} = \left\{
\begin{array}{cl}
1 & \quad ; \quad (k,\ell) \in \{(3,1), (4,2), \dots, (2n, 2n-2), (1, 2n-1), (2, 2n) \}, \cr
0 & \quad ; \quad \text{otherwise}.	
\end{array}\right.
\end{equation}
leading to 
$$\tilde F_{1,1} = \sum _{\ell =1}^{2n-2} \tilde \varphi_{1,\ell+2} \cdot  \tilde \varphi ^{-1}_{\ell,1}  +  \tilde \varphi _{1,1}\cdot \tilde \varphi ^{-1}_{2n-1,1} + \tilde \varphi _{1,2}\cdot \tilde \varphi ^{-1}_{2n,1}. $$
However, regardless of the particular $\varphi$ we are working with, since it is a block-diagonal sum of $(n/2)$ matrices of size $4\times 4$, it follows that $\tilde \varphi _{1,b} =0= \tilde \varphi ^{-1}_{b,1}$ if $b\ge 5$. This further simplifies the above formula for $F_{1,1}$ to 
$$\tilde F_{1,1} =\left\{
\begin{array}{cl}
 \tilde \varphi _{1,3} \cdot \tilde \varphi ^{-1}_{1,1} + \tilde \varphi _{1,4} \cdot \tilde \varphi ^{-1}_{2,1}  & \quad ; \quad n\ge 4, \cr
  \tilde \varphi _{1,3} \cdot \tilde \varphi ^{-1}_{1,1} + \tilde \varphi _{1,4} \cdot \tilde \varphi ^{-1}_{2,1}  +   \tilde \varphi _{1,1}\cdot \tilde \varphi ^{-1}_{3,1} + \tilde \varphi _{1,2}\cdot \tilde \varphi ^{-1}_{4,1} & \quad ; \quad n=2. 
 
\end{array}
\right.$$
An explicit computation shows that 
$$\tilde F_{1,1} =\left\{
\begin{array}{rl}
 1/5 & \quad ; \quad i_1=1 \text{ and } n\ge 4, \cr
 -1/5 & \quad ; \quad i_1=2 \text{ and } n\ge 4, \cr
 2/5 & \quad ; \quad i_1=1 \text{ and } n=2, \cr
-2/5 & \quad ; \quad i_1=2 \text{ and } n=2. 
\end{array}
\right.	 
$$
Thus $\tilde F_{1,1}\notin \mathbb Z$ and so $\tilde F$ as an isomorphism on $\mathbb Z^{2n+1}$ does not exist, showing that $\varphi$ cannot be equivariant. 
\vskip3mm

We next turn to the case of $n\ge 3$ odd. According to Theorem \ref{EmbeddingInCaseOfNOdd}, $\varphi$ has the form $\varphi = \psi_{i_1}\oplus \dots \oplus \psi _{i_{(n-1)/2}}\oplus \psi _3$. As before, we view all maps as represented by matrices. 

Similarly to the first half of the proof, let $\tilde \varphi = \psi_{i_1}\oplus \dots \oplus \psi _{i_{(n-1)/2}} : \mathbb Z^{2n-2} \to \mathbb Z^{2n-2}$ so that $\varphi = \tilde \varphi \oplus \psi_3$. Thus $\tilde \varphi$ is (represented by) a $(2n-2)\times (2n-2)$ matrix while $\psi_3$ is a $4\times 2$ matrix.  It is easy to see that $\det \tilde \varphi \ne 0$, and we let $\tilde \varphi ^{-1}$ be its inverse matrix with rational entries. Let $\bar \varphi$ be the $(2n)\times (2n+2)$ matrix obtained from the $(2n-2)\times (2n-2)$ matrix $\tilde \varphi ^{-1}$ by adding 2 rows of zeros to the bottom and 4 columns of zeros to the right. Said differently, $\bar \varphi = \tilde \varphi ^{-1} \oplus \mathbb O_{2\times 4}$ where $\mathbb O_{r\times s}$ refers to the zero  matrix of type $r\times s$ . Then   
$$
\varphi \cdot \bar \varphi = \left[
\begin{array}{ccc|c}
& & &    \cr
& \text{Id}_{(2n-2)\times (2n-2)} & &  \mathbb O_{2n-2, 4} \cr  
& & & \cr \hline 
& \mathbb O_{4,2n-2} &  &  \mathbb O_{4,4}  
\end{array}
\right].
$$ 

Suppose again that an isomorphism $\tilde F:\mathbb Z^{2n+2} \to \mathbb Z^{2n+2}$ exists such that $\varphi \cdot f = \tilde F\cdot \varphi$. Then $\varphi \cdot f\cdot \bar \varphi  = \tilde F\cdot \varphi \cdot \bar \varphi = [\tilde F_{r,s}]_{r,s=1,\dots, 2n-2}$ and we shall again use this latter relation to compute $\tilde F_{1,1}$. From $\tilde F_{1,1} = \sum _{k,\ell = 1}^{2n} \varphi_{1,k}\cdot f_{k,\ell} \cdot \bar \varphi_{\ell, 1}$ and \eqref{FormulaForLittleF} it follows that 
\begin{align*}
\tilde F_{1,1} & = \sum _{\ell =1}^{2n-2} \varphi_{1,\ell+2} \cdot  \bar \varphi _{\ell,1}  +  \varphi _{1,1}\cdot \bar \varphi _{2n-1,1} + \varphi _{1,2}\cdot \bar \varphi _{2n,1} \cr
& =  \sum _{\ell =1}^{2n-4} \tilde \varphi_{1,\ell+2} \cdot  \tilde \varphi^{-1} _{\ell,1}  +  \varphi_{1,2n-1} \cdot  \bar \varphi _{2n-3,1} + \varphi_{1,2n} \cdot  \bar \varphi _{2n-2,1} +  \varphi _{1,1}\cdot \bar \varphi _{2n-1,1} + \varphi _{1,2}\cdot \bar \varphi _{2n,1} 
\end{align*}
Since $n\ge 3$ then $\varphi_{1,2n-1}, \varphi_{1,2n}, \bar \varphi_{2n-1,1}, \bar \varphi _{2n,1} $ equal zero, and likewise $\tilde \varphi _{1,b} = 0 = \tilde \varphi^{-1}_{b,1}$ when $b\ge 5$. These observations turn the above formula into 
$$\tilde F_{1,1} = \tilde \varphi_{1,3} \cdot  \tilde \varphi^{-1} _{1,1} + \tilde \varphi_{1,4} \cdot  \tilde \varphi^{-1} _{2,1} = \left\{
\begin{array}{rl}
1/5 & \quad ; \quad i_1=1, \cr
-1/5 & \quad ; \quad i_1=2. 	
\end{array}
\right.  
$$
Thus $\tilde F_{1,1}\notin \mathbb Z$ and so the isomorphism $\tilde F:\mathbb Z^{2n+2}\to \mathbb Z^{2n+2}$ again cannot exist. 	
\end{proof}
\vskip3mm
%%%%%
%%%%%
Lastly, combining Theorems \ref{EmbeddingInCaseOfNEven}, \ref{EmbeddingInCaseOfNOdd} and \ref{TheoremOnNonequivarianceOfEmbeddings} with Corollaries \ref{CorollaryEquivariantDiagramForMobiusBand} and \ref{CorollaryEquivariantDiagramForKleinBottle} (see also Section \ref{SummarySection}) and Remark \ref{RemarkAboutPositiveDefiniteMatrixForKn}, proves Theorem \ref{main}. 
%%%%%%%%%%%%%%%
%%%%%%%%%%%%%%%
\bibliographystyle{plain}
\bibliography{bibliography}

\end{document}